\definecolor{azul}{rgb}{0.1,0.6,0.86}
\newtheorem{fed}{Definition}[section]
\theoremstyle{definition}
\newtheorem{teo}[fed]{Theorem}
\newtheorem*{teo*}{Theorem}
\newtheorem{lem}[fed]{Lemma}
\newtheorem{cor}[fed]{Corollary}
\newtheorem{pro}[fed]{Proposition}
\theoremstyle{definition}
\newtheorem{rem}[fed]{Remark}
\newtheorem{exa}[fed]{Example}
\definecolor{myblue}{rgb}{0,0.33,0.55}
\definecolor{sidebardarkcolor}{rgb}{0.21,0.31,0.40}
\definecolor{sidebarlightcolor}{rgb}{0.7,0.77,0.836}
\def\noi{\noindent}
\def\EOE{\hfill $\blacktriangle$}
\def\bdem{\begin{proof}}
\def\edem{\end{proof}}
\def\eps{\varepsilon}
\def\la{\lambda}
\def\La{\Lambda}
\def\w{\omega}
\def\W{\Omega}
\def\N{\mathbb{N}}
\def\Z{\mathbb{Z}}
\def\R{\mathbb{R}}
\def\C{\mathbb{C}}
\def\D{\mathbb{D}}
\def\T{\mathbb{T}}
\def\Q{\mathbb{Q}}
\def\K{\mathbb{K}}
\def\cD{\mathcal{D}}
\def\ede{\mathcal{D}}
\def\ddd{\mathcal{Q}}
\def\G{\mathcal{G}}
\def\cP{\mathcal{P}}
\def\cQ{\mathcal{Q}}
\def\cT{{\cal T}}
\def\G{\widehat{G}}
\newcommand{\peso}[1]{ \ \mbox{ \rm  #1 } \  }
\DeclareMathOperator{\sgn}{sgn}
\DeclareMathOperator{\leqm}{\preccurlyeq}
\newcommand{\pint}[1]{\displaystyle \left \langle\, #1 \, \right\rangle}
\newcommand{\sub}[2]{{#1}_{\mbox{\tiny{${#2}$}}}}
\newcommand{\dcg}[2]{\mathscr{D}_{#1}^{(#2)}}
\journal{Advances in Mathematics}
\begin{document}

\begin{frontmatter}

\title{{{Multi-tiling sets, Riesz bases, and sampling near the critical density in LCA groups}}}

\date{}

\author[A1]{Elona Agora}
 \ead{elona.agora@ub.edu}
\author[A1,A2]{Jorge Antezana} 
 \ead{antezana@mate.unlp.edu.ar}
\author[A3,A4]{Carlos Cabrelli\corref{cor1}}
 \ead{cabrelli@dm.uba.ar}
 \ead[url]{http://mate.dm.uba.ar/~cabrelli/}

\address[A1]{Instituto Argentino de Matemática "Alberto P. Calderón" (IAM-CONICET), Argentina} 

\address[A2] {Departamento de Matemática, Universidad Nacional de La Plata, Argentina} 

\address[A3]{Departamento de Matemática, Universidad de Buenos Aires, Argentina}

\address[A4]{Instituto de Matemática "Luis Santaló" (IMAS-CONICET-UBA),  Argentina}

\cortext[cor1]{Corresponding author: Phone: +5491168610101 - Fax: +541145763335}


\begin{abstract}

\noi We prove the existence of sampling sets and interpolation sets near the critical density, in Paley Wiener spaces of  a locally compact abelian  (LCA) group $G$. This solves a problem left by  Gr\"ochenig, Kutyniok, and Seip in the article: `Landau's density conditions for LCA groups' (J. of Funct. Anal. 255 (2008) 1831-1850). To achieve this result,  we prove  the existence of universal Riesz bases of characters for $L^2(\Omega)$, provided that the relatively compact subset $\Omega$ of the dual group $\widehat{G}$ satisfies a multi-tiling condition.
This last result generalizes Fuglede's theorem,  and extends to LCA groups setting recent constructions of Riesz bases of exponentials in bounded sets of $\R^d$.
\end{abstract}

\begin{keyword}
Sampling \sep Interpolation\sep Beurling's densities\sep Riesz bases\sep Multi-tiling\sep Dyadic cubes\sep Locally compact abelian groups



\end{keyword}


\end{frontmatter}


\section{Introduction}

\noi Consider a locally compact abelian (LCA) group  $G$,  and let $\widehat{G}$ denote its dual group. Given a relatively compact Borel subset $\Omega$ of $\widehat{G}$, the space $L^2(\W)$ is identified with the subspace of $L^2(\G)$ consisting of those classes corresponding to functions vanishing almost everywhere on the complement of $\W$. The \textbf{Paley Wiener space} $PW_{\Omega}$ consists of all square integrable functions whose Fourier transform belongs to $L^2(\Omega)$.For this space, a set $\Lambda\subseteq G$ is a \textbf{sampling set} if there exist constants $A, B>0$ such that for any $f\in PW_{\Omega}$,
$$
A||f||_2^2 \leq \sum _{\la \in \La} |f(\la)|^2 \leq B||f||^2_2.
$$
On the other hand, $\La$ is an \textbf{interpolation set} for $PW_{\Omega}$ if the interpolation problem
$$
f(\la)=c_\la, 
$$
has a solution $f\in PW_{\Omega}$ for every  $\{c_\la \}_{\la \in \La}  \in \ell^2(\La)$.  A set $\La$ that is at the same time a sampling and interpolation set is a \textbf{complete interpolation set}. 

\medskip

Using the Fourier transform, it turns out that $\La$ is a sampling set (resp. interpolation set, complete interpolation set) if and only if $\La$, as a set set of characters restricted to $\Omega$, is a frame (resp. Riesz sequence, Riesz basis) of $L^2(\Omega)$.

Sampling and interpolation sets satisfy the following necessary geometric conditions, proved by Landau in \cite{Landau} for $\R^d$, and later on extended to LCA groups in \cite{GKS}:

\begin{enumerate}
\item[(i)] A sampling set $\La$ for $PW_{\Omega}$ satisfies  $\cD^-(\La)\geq m_{\widehat{G}}(\Omega)$;
\item[(ii)] An interpolation set $\La$ for $PW_{\Omega}$ satisfies  $\cD^+(\La)\leq m_{\widehat{G}}(\Omega)$, 
\end{enumerate}
where $m_{\widehat{G}}$ denotes the Haar measure of $\widehat{G}$, and $\cD^+$ and  $\cD^-$ denote the so called upper and lower Beurling's  densities (see Section \ref{densitygroup} for precise definitions). In some sense, the Beurling's densities measure how  the set $\La$ is distributed in $G$ with respect to the distribution of a reference set, that for instance in the case of $\R^d$ is the lattice $\Z^d$.

\medskip

In \cite{GKS}, Gr\"ochenig, Kutyniok and Seip raised the natural question of whether there exist sampling sets and interpolation sets for $PW_{\Omega}$ with densities arbitrarily close to the critical density $m_{\widehat{G}}(\Omega)$. Except for the particular case $G=\R^d$, proved by  Marzo in \cite{JM} adapting a construction of Lyubarskii and Seip~\cite{LS}  and Kohlenberg \cite{K}, the problem remained open till now. The main obstacle, which is a recurrent problem in general LCA groups, is the absence of a natural substitute of rescalings. Therefore, a different approach is required. 

\medskip

The main goal of this paper is to give a complete solution to the aforementioned problem. 
A natural strategy is to show that, given a compact set $\Omega$, there exists an outer (resp. inner) approximation set $\Omega_\eps$ of $\Omega$ such that $L^2(\Omega_\eps)$ has a Riesz basis of characters. As a consequence, we obtain the existence of sampling (resp. interpolation) sets near the critical density (see Theorem \ref{near critical}). Indeed, the Riesz basis of $L^2(\Omega_\eps)$ becomes a frame for $L^2(\Omega)$ if $\Omega_\eps$ is an outer approximation, and it becomes a Riesz sequence if $\Omega_\eps$ is an inner approximation. The problem to accomplish this strategy in general  LCA groups is to prove the existence of such ``good'' approximation sets. 
To overcome this difficulty, we proceed as follows.

 \medskip

 First,  we show that, given a relatively compact Borel set $\Omega\subset \widehat{G}$ that satisfies some tiling condition, the space $L^2(\Omega)$ admits a Riesz basis of characters (see Section \ref{multi} for details). This is motivated by recent results due to Grepstad and Lev in~\cite{GeL} (see also~\cite{Ko}) and it provides an extension of their results. 
In order to prove this generalization we use operator theoretical techniques developed around the theory of shift invariant spaces. The shift invariant techniques provide a better understanding of the problem. As a consequence, besides the extension of the multi-tiling result of Grepstad and Lev to the group setting, we also prove  the converse, that is, if a relatively compact set $\Omega\subset \widehat{G}$ admits a Riesz basis of characters with a periodic set of frequencies, then $\Omega$  satisfies a multi-tiling condition (see Theorem \ref{la vuelta} for details). This is new even for $\R^d$, and complements the result of Grepstad and Lev in order to get a generalization of Fuglede's theorem \cite{F} for Riesz bases and multi-tiling sets. In particular it shows that, if for instance $\Omega$ is a triangle in $\R^2$, then $L^2(\W)$ does not admit a Riesz basis of exponentials with a periodic set of frequencies.
Furthermore, we prove that the boundedness condition over $\W$ cannot be avoided. In order to show that, we construct a counterexample 
that in particular responds in a negative way a question left open by Kolountzakis in \cite{Ko}.

\medskip

In this way we get several candidates for approximation sets, i.e., those sets that satisfy a tiling condition with respect to a lattice. However, in general  there are not sufficiently many of such sets to assure the required approximation, essentially because the group $\widehat{G}$ may not have a rich family of lattices. In order to enlarge the family of candidates, we show that we can also consider sets $\Omega_\eps$ that satisfy the tiling condition in an appropriate quotient group $\G/K$ instead of in the group $\widehat{G}$. Thus, we obtain a Riesz basis of characters in $L^2(\pi(\Omega_\eps))$, where $\pi$ denotes the canonical projection onto that quotient. Finally, we prove that this Riesz basis can be lifted to a Riesz basis of \textit{characters} for $L^2(\Omega_{\eps})$  (see the last part of Section~\ref{Dyadics} for the details).

\bigskip

The paper is organized as follows. In section \ref{prel}, we introduce preliminary results on LCA groups. Section \ref{critical dens} describes the 
main results. In section \ref{multi} we construct the Riesz basis of characters for $L^2(\Omega)$, under a multi-tiling condition on the set $\Omega$. In section  \ref{Dyadics}  we introduce the notion of quasi-dyadic cubes, which are used to construct the approximation sets. Finally, with  all the necessary techniques at hand, we proceed to the proof of the main result in section~\ref{main proof}.

\section{Preliminaries} \label{prel}

\noi Throughout this section we  review basic facts on  locally compact abelian groups (for more details see \cite{DE}, \cite{HR1}, \cite{HR2}, \cite{R}), setting in this way the notations we need for the following sections.  Then, we introduce $H$-invariant spaces that generalize the concept of shift invariant spaces in the context of these groups (see  \cite{CP}). 

\subsection{LCA Groups}

\noi Let $G$ denote a Hausdorff locally compact abelian (LCA) group, and $\widehat{G}$ its dual group, that is;
$$
\widehat{G}= \{\gamma: G \to \C, \ \mbox{and}\ \gamma\ \mbox{is a continuous character of}\ G\},
$$
where a character is a function satisfying the following properties:
\begin{itemize}
\item[(i)] $ |\gamma(x)| =1, \,  \forall x\in G$;
\item[(ii)] $\gamma(x+y)=\gamma(x)\gamma(y),  \, \forall x, y \in G$.
\end{itemize}

Thus, the characters generalize the exponential functions $\gamma(x)=\gamma_t(x)= e^{2\pi it x}$ in the case $G=(\R,+)$.  
On every LCA group $G$ there exists a Haar measure. It is a non-negative, regular Borel measure 
$m_G$ that is non-identically zero and translation-invariant, which means:
$$
m_G(E+x)=m_G(E),
$$
for every element $x\in G$ and every Borel set $E\subset G$. This measure is unique up to a constant. Analogously to the Lebesgue spaces, we can define the $L^p(G)=L^p(G, m_G)$ spaces associated to the group $G$ and the measure $m_G$:
$$
L^p(G):= \Big\{f:G \to \C, \,\,\, f \peso{is measurable and} \int_G |f(x)|^p\,dm_G(x)<\infty\Big\}.
$$

\begin{teo} Let $G$ be an LCA group and $\widehat{G}$ its dual. Then
\begin{itemize}
\item[(i)] The dual group $\widehat{G}$, with the operation $(\gamma+\gamma')(x)= \gamma(x)\gamma'(x)$ is an LCA group. The topology in $\widehat{G}$ is the one induced by the identification of the characters of the group with the characters of the algebra $L^1(G)$.
\item[(ii)] The dual group of $\widehat{G}$ is topologically isomorphic to $G$, that is, $\widehat{\widehat{G \,}}\approx G$, with the identification $g\in G \leftrightarrow e_g \in \widehat{\widehat{G\,}}$, where $e_g(\gamma):=\gamma(g)$.
\item[(iii)] $G$ is discrete (resp. compact) if and only if $\widehat{G}$ is compact (resp. discrete).
\end{itemize}
\end{teo}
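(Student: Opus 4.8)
These three assertions are classical facts of abstract harmonic analysis, and the plan is to obtain each of them from properties of the commutative Banach algebra $L^1(G)$ together with the Pontryagin duality theorem. For (i) the algebraic part is immediate: the pointwise product of two characters is again a character, the constant function $1$ is a neutral element, and $\overline{\gamma}$ is the inverse of $\gamma$, so $\widehat G$ is an abelian group. For the topology I would equip $\widehat G$ with the compact-open topology (uniform convergence on compact subsets of $G$) and check directly that multiplication and inversion are continuous and that the space is Hausdorff; the point requiring the Banach-algebra input is local compactness. Here I would use that every nonzero multiplicative linear functional on $L^1(G)$ has the form $f\mapsto \int_G f(x)\,\overline{\gamma(x)}\,dm_G(x)$ for a unique $\gamma\in\widehat G$, and that this correspondence is a homeomorphism of $\widehat G$, with the compact-open topology, onto the Gelfand spectrum $\Delta(L^1(G))$ with its weak-$*$ topology; since the Gelfand spectrum of any commutative Banach algebra is locally compact and Hausdorff, local compactness of $\widehat G$ follows, and this also identifies its topology with the one described in the statement.

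Assertion (ii), Pontryagin duality, is the heart of the matter and the step I expect to be the main obstacle. The evaluation map $\Phi\colon G\to\widehat{\widehat G}$ given by $\Phi(g)=e_g$, $e_g(\gamma)=\gamma(g)$, is easily seen to be a continuous homomorphism. Injectivity is equivalent to the fact that the continuous characters separate the points of $G$, which I would deduce from semisimplicity of $L^1(G)$ (a consequence of the Plancherel theorem) or, alternatively, from the structure theorem $G\cong\R^n\times G_0$ with $G_0$ containing a compact open subgroup. That $\Phi$ is surjective and a homeomorphism onto its image is the deep ingredient; the cleanest route is to combine the Fourier inversion theorem on $L^1(G)\cap L^2(G)$ with the Plancherel theorem in order to show that $\Phi(G)$ is both dense and closed in $\widehat{\widehat G}$, hence equal to it, and that $\Phi$ is open onto its image. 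Alternatively one may verify duality directly on the building blocks $\R$, $\T$, $\Z$ and finite groups, where it is elementary, and then propagate it through finite products, closed subgroups and quotients via the structure theorem.

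Finally, (iii) follows quickly once (i) and (ii) are available. If $G$ is discrete, then $L^1(G)=\ell^1(G)$ is unital with unit the point mass at $0$, so its Gelfand spectrum is weak-$*$ compact, and by (i) this means $\widehat G$ is compact. If instead $G$ is compact, normalize $m_G(G)=1$; the characters form an orthonormal system in $L^2(G)$, so $\int_G\gamma\,dm_G=0$ for every nontrivial $\gamma$, while $\int_G\gamma\,dm_G=1$ for $\gamma$ trivial, and hence $\{\gamma:|\int_G\gamma\,dm_G|>\tfrac{1}{2}\}$ is an open neighbourhood of the trivial character equal to $\{1\}$; by translation every singleton is open and $\widehat G$ is discrete. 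The two converses are obtained by applying these implications with $\widehat G$ in place of $G$ and invoking $\widehat{\widehat G}\approx G$ from (ii). For the detailed arguments I would simply refer to \cite{DE,HR1,HR2,R}, since everything here is standard.
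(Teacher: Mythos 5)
Your outline is correct and is the standard textbook argument. The paper offers no proof of this theorem at all: it is stated as classical background in the preliminaries with a pointer to \cite{DE}, \cite{HR1}, \cite{HR2}, \cite{R}, which is exactly where your sketch also lands, so there is nothing substantive to compare beyond noting that your route (Gelfand theory of $L^1(G)$ for (i) and (iii), and Fourier inversion/Plancherel or the structure theory for Pontryagin duality in (ii)) is the one those references take.
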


As a consequence of (ii) of the previous theorem, we could use the notation $(x,\gamma)$ for the complex number $\gamma(x)$, representing either the character $\gamma$ applied to $x$ or the character $x$ applied to $\gamma$.

\bigskip

Taking $f\in L^1(G)$ we define the Fourier transform of $f$, as the function $\hat{f}:\widehat{G}\to\C$ given by
$$
\hat{f}(\gamma)=\int_G f(x)(x,-\gamma)\, dm_G(x), \,\,\, \gamma \in \widehat{G},
$$
If the Haar measure of the dual group $\widehat{G}$ is normalized conveniently,  we obtain the inversion formula
$$
f(x)= \int_{\widehat{G}} \hat{f}(\gamma) (x,\gamma)dm_{\widehat{G}}(\gamma),
$$
for a specific class of functions. In the case that the Haar measures $m_G$ and $m_{\widehat{G}}$ are normalized such that the inversion formula holds, the Fourier transform on $L^1(G)\cap L^2(G)$ can be extended to a unitary operator from $L^2(G)$ onto $L^2(\widehat{G})$. Thus the Parseval formula holds:
$$
\pint{f,g}= \int_G f(x)\overline{g(x)} dm_G(x)
= 
\int_{\widehat{G}} \hat{f}(\gamma)\overline{\hat{g}(\gamma)} dm_{\widehat{G}}(\gamma) = \langle \hat{f},\hat{g}\rangle
$$
for $f,g \in L^2(G)$. We conclude this subsection with the next classical result.

\begin{pro} \label{orthba}
If $G$ is a compact group, then the characters of $G$ form an orthonormal basis for $L^2(G)$.
\end{pro}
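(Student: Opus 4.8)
The plan is to establish the two defining properties of an orthonormal basis separately: orthonormality, which is a short computation using translation invariance, and completeness, which I will deduce from the Plancherel theorem already recorded above.

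First normalize the Haar measure so that $m_G(G)=1$, which is possible since $G$ is compact. For orthonormality, given $\gamma,\gamma'\in\widehat{G}$ I would write
$$
\pint{\gamma,\gamma'}=\int_G \gamma(x)\,\overline{\gamma'(x)}\,dm_G(x)=\int_G (\gamma-\gamma')(x)\,dm_G(x),
$$
using $\overline{\gamma'(x)}=(-\gamma')(x)$ in the group $\widehat{G}$. If $\gamma=\gamma'$ the integrand is the constant $1$, so the integral equals $m_G(G)=1$. If $\gamma\neq\gamma'$, then $\chi:=\gamma-\gamma'$ is a nontrivial character, so there is $x_0\in G$ with $\chi(x_0)\neq 1$; translation invariance of $m_G$ then gives
$$
\int_G \chi(x)\,dm_G(x)=\int_G \chi(x+x_0)\,dm_G(x)=\chi(x_0)\int_G \chi(x)\,dm_G(x),
$$
so $(1-\chi(x_0))\int_G\chi\,dm_G=0$ and hence $\int_G\chi\,dm_G=0$. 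Thus $\{\gamma\}_{\gamma\in\widehat{G}}$ is orthonormal in $L^2(G)$.

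For completeness it is enough to show that no nonzero $f\in L^2(G)$ is orthogonal to every character. Since $G$ is compact, $\widehat{G}$ is discrete by part (iii) of the theorem above, so (with the counting measure) $L^2(\widehat{G})=\ell^2(\widehat{G})$. Straight from the definition of the Fourier transform, $\widehat{f}(\gamma)=\int_G f(x)\,\overline{\gamma(x)}\,dm_G(x)=\pint{f,\gamma}$, so the assumption $\pint{f,\gamma}=0$ for all $\gamma\in\widehat{G}$ means exactly that $\widehat{f}\equiv 0$. By the Plancherel theorem recalled above, the Fourier transform extends to a unitary --- in particular isometric --- map of $L^2(G)$ onto $L^2(\widehat{G})$, so $\|f\|_2=\|\widehat{f}\|_{L^2(\widehat{G})}=0$, i.e.\ $f=0$. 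Combined with the orthonormality step, this shows that $\{\gamma\}_{\gamma\in\widehat{G}}$ is an orthonormal basis of $L^2(G)$.

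I do not expect a genuine obstacle here: orthonormality is the translation argument above, and the only nontrivial ingredient for completeness is the Plancherel theorem for LCA groups, which is already available in the excerpt (together with the discreteness of $\widehat{G}$). An alternative route to completeness would be Stone--Weierstrass --- the characters form a conjugation-closed, point-separating subalgebra of $C(G)$ containing the constants, hence are uniformly dense in $C(G)$ and therefore dense in $L^2(G)$ --- but this merely relocates the depth into the fact that $\widehat{G}$ separates the points of $G$.
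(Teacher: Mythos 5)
Your argument is correct, and there is nothing in the paper to compare it against: Proposition \ref{orthba} is stated there as a classical fact (essentially the Peter--Weyl theorem for abelian groups) and no proof is given. Your orthonormality computation via translation invariance is the standard one and is complete. For the completeness step, the only point worth flagging is a possible circularity: for compact groups one often \emph{derives} Plancherel from the completeness of the characters (it is just Parseval for that orthonormal basis), so invoking Plancherel here is legitimate only if one takes the general LCA Plancherel theorem as proved independently --- which is indeed how the paper's preliminaries (following Rudin) present it, via the inversion theorem and positive-definite functions rather than via Peter--Weyl. Given that, your deduction $\pint{f,\gamma}=\hat f(\gamma)=0$ for all $\gamma$ forcing $\|f\|_2=\|\hat f\|_2=0$ is sound (and note that $L^2(G)\subseteq L^1(G)$ for compact $G$, so $\hat f$ is honestly defined pointwise). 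Your alternative via Stone--Weierstrass avoids the dependency question entirely, at the cost of needing that $\widehat G$ separates points, as you correctly observe; either route is acceptable.
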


\subsection{ H-invariant spaces} 

\noi In this subsection we will review some basic aspects of the theory of shift invariant spaces in LCA groups. We will specially  focus on the Paley Wiener spaces, that constitutes an important family of shift invariant spaces in which we are particularly interested. The reader is referred to  \cite{CP}, where he can find the results in full generality, as well as other results related to shift invariant spaces in LCA groups.  Let $G$ be an LCA group, and let $H$ be a \textbf{uniform lattice} on $G$, i.e.,  a discrete subgroup of $G$ such that $G/ H$ is compact. 
Recall that a \textbf{Borel section} of $G/H$ is a set of representatives of this quotient, that is, a subset $A$ of $G$ containing exactly one element of each coset. Thus, each element $x \in G$ has a unique expression of the form $x = a + h$ with $a \in A$ and $h \in H$. Moreover, it can be proved that there exists a relatively compact Borel section of $G/ H$, which will be called \textbf{fundamental domain} (see \cite{FGg} and \cite{KK}).

\medskip

\begin{fed}
We say that a closed subspace $V\subset L^2(G)$ is $H$-invariant if
$$
f\in V \peso{then} \tau_h f\in V, \,\,\, \forall h \in H,
$$
where $\tau_hf(x)= f(x-h)$.
\end{fed}

\bigskip

As we have mentioned, Paley Wiener spaces are important examples of $H$-invariant spaces, which in this context are defined by
$$
PW_\Omega= \{f\in L^2(G): \,  \, \hat{f} \in  L^2(\Omega)\},
$$
where $\Omega\subset \widehat{G}$ is a Borel set of finite measure (see \cite{GKS}). Actually, this space is invariant by any translation.

\medskip

Let $\La$ be  the \textbf{dual lattice} of $H$; that is, the \textbf{annihilator} of $H$ defined by 
$$ 
\La=\{\gamma\in \widehat{G}: (h, \gamma)=1, \, \text{for all} \, h \in H\}.
$$ 
Suppose that $\Omega$ \textbf{tiles} $\widehat{G}$ by translations of $\La$, i.e.
$$
\Delta_{\Omega}(x):= \sum_{\la\in \La} \chi_{\Omega} (x-\la)=1,  \peso{a.e.}
$$ 
In this case, it is well known that $\{e_h\}_{h\in H}$ is an orthonormal basis of $L^2(\Omega)$. Indeed, since $\La$ is also a uniform lattice, in particular,   $\widehat{G} /  \La$ is compact. So, as we recall in Proposition~\ref{orthba}, $H\simeq (\widehat{G} / \La)^{\widehat{\ }}$ is an orthonormal basis of $L^2(\widehat{G} / \La)$. On the other hand, this space is isometrically isomorphic to $L^2(\Omega)$, because $1$-tiling sets are Borel sections of the quotient group $\widehat{G} / \La$ up to a zero measure set.

\bigskip

A set $\Omega$ \textbf{multi-tiles}, or more precisely \textbf{$k$-tiles} $\widehat{G}$ by translations of $\La$  if
$$
\Delta_{\Omega}(x):= \sum_{\la\in \La} \chi_{\Omega} (x-\la)=k,  \peso{a.e.}
$$ 
For example, if $\Omega$ is a disjoint union of 1-tiling sets then the previous condition is satisfied.  Next lemma shows that the reverse also holds, not only in $\R^n$ (see Lemma 1 in~\cite{Ko}), but also in the context of the LCA groups.

\medskip

\begin{lem}
Let $G$ be an LCA group and $H\subset G$ a countable discrete subgroup.
A mesurable set $\Omega\subset G, \;k$-tiles $G$ under the translation set  $H$,
if and only if  $$\Omega = \Omega_1\cup\dots\cup \Omega_k\cup R,$$ where $R$ is a zero measure set, 
and the sets $\Omega_j$,  $ 1\leq j\leq k$ are measurable, disjoint and each of them tiles $G$ by translations of  $H$.
\end{lem}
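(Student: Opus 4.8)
The plan is to prove the two implications separately, with the easy direction first. If $\Omega = \Omega_1 \cup \dots \cup \Omega_k \cup R$ with $R$ of measure zero and each $\Omega_j$ tiling $G$ by $H$, then summing the tiling identities gives
\[
\Delta_\Omega(x) = \sum_{\lambda \in H} \chi_\Omega(x-\lambda) = \sum_{j=1}^k \sum_{\lambda \in H} \chi_{\Omega_j}(x-\lambda) = \sum_{j=1}^k 1 = k \qquad \text{a.e.},
\]
where the disjointness (up to null sets) guarantees $\chi_\Omega = \sum_j \chi_{\Omega_j}$ almost everywhere, so $\Omega$ indeed $k$-tiles $G$ under $H$. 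This direction uses nothing beyond countable additivity and the observation that a countable union of null translates of $R$ is still null.

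For the converse — the substantive direction — I would fix a fundamental domain (relatively compact Borel section) $D$ of $G/H$, which exists by the results recalled in the preliminaries. The hypothesis $\Delta_\Omega = k$ a.e. means that for almost every $x \in D$, the $H$-orbit of $x$ meets $\Omega$ in exactly $k$ points; write these as $x + h_1(x), \dots, x + h_k(x)$ with $h_1(x), \dots, h_k(x) \in H$ pairwise distinct. The idea is to peel off the tiles one at a time by always selecting the ``first'' element of the fibre according to a fixed enumeration of the countable group $H$: since $H$ is countable we enumerate $H = \{\eta_1, \eta_2, \dots\}$ and define
\[
\Omega_1 = \{\, x + \eta_i \in \Omega : x \in D,\ \eta_i \text{ is the first element of } H \text{ with } x + \eta_i \in \Omega \,\},
\]
and then set $\Omega_2$ to be the analogous first-hit set for $\Omega \setminus \Omega_1$, and so on up to $\Omega_k$; by the $k$-tiling hypothesis, after removing $\Omega_1, \dots, \Omega_k$ the remainder $R = \Omega \setminus (\Omega_1 \cup \dots \cup \Omega_k)$ meets almost every $H$-orbit in zero points, hence is null (its intersection with $D + \eta_i$ is null for each $i$, and $\Omega \subseteq \bigcup_i (D+\eta_i)$). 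By construction each $\Omega_j$ meets every $H$-orbit in exactly one point for a.e.\ base point, so $\Omega_j$ tiles $G$ by $H$, and the $\Omega_j$ are pairwise disjoint by construction.

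The main obstacle is measurability: one has to check that each $\Omega_j$ defined by this ``first hit'' rule is a Borel (or at least Lebesgue-measurable) set. This is where the countability of $H$ is essential and why the statement is phrased for countable discrete $H$. Concretely, $\{x \in D : x + \eta_i \in \Omega\} = D \cap (\Omega - \eta_i)$ is measurable for each $i$, and the ``first hit'' condition expresses $\Omega_1 \cap (D + \eta_i)$ as $(D+\eta_i) \cap \Omega$ minus the translates by $\eta_i$ of the earlier-hit sets $\bigcup_{j < i} \big(D \cap (\Omega - \eta_j)\big)$ — a countable Boolean combination of measurable sets, hence measurable; iterating handles $\Omega_2, \dots, \Omega_k$. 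A small technical point to dispatch along the way is that translation by elements of $H$ preserves Haar-null sets (translation-invariance of $m_G$) and that $\Omega$, having the $k$-tiling property, can be assumed (after discarding a null set) to be contained in $\bigcup_{i} (D + \eta_i)$, which lets us conclude $R$ is null from the fact that it is null on each piece $D + \eta_i$. No rescaling or structural facts about $G$ beyond the existence of a fundamental domain are needed, so the argument is purely set-theoretic/measure-theoretic once that is in hand.
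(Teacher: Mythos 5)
Your proof is correct and follows essentially the same route as the paper: the paper's sets $E_{j,n}=\{d\in D\setminus E:\ i_j(d)=n\}$ and $\Omega_j=\bigcup_n(E_{j,n}+h_n)$ are exactly your iterated ``first hit'' construction over a fixed enumeration of $H$, with the same measurability bookkeeping and the same observation that the leftover set $R$ is null because it lives over the null exceptional set of the fundamental domain. No substantive differences to report.
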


\medskip

\bdem 
If $\Omega$ is a  disjoint union of $k$ sets of representatives of $G/H$ up to measure zero then clearly 
$ \sum_{h\in H} \sub{\chi}{\Omega} (x-h)=k,  \peso{a.e.}$

For the converse, consider $D$ to be a fundamental domain of $G/H$ and let
$\{h_j\}_{j\in\N}$ be an enumeration of the elements of $H.$
 We have  $\Delta_{\Omega}(d)= k$ for almost all $d \in D$. If $E$ denotes the set of the exceptions,
 define for $d\in D\setminus E$,
 $$
i_j(d)= \min\{n\in\N:  \sum_{s=1}^n \sub{\chi}{\Omega} (d+h_s)=j\}, \;\qquad j=1,\dots,k.
$$
and the measurable sets,
$$
E_{j,n} = \{d \in D \setminus E:   i_j(d) = n\}, \;\qquad  n \in \N.
$$
Finally for $j=1,\dots,k,$ let $\;\;\Omega_j = \bigcup_{n\in\N} (E_{j,n}+h_n).$  It is straightforward to see that $\Omega  = \bigcup_{j=1}^k \Omega_j \cup R$,  is the desired decomposition. Here the remaining set $R=\Omega\setminus(\Omega_1\cup\ldots\cup\Omega_k)$ has measure zero because it is contained in $E+H$.
\edem

Going back to the $H$-invariant spaces, let us recall now the following simple but useful proposition, that in the case of LCA groups is a direct consequence 
of Parseval identity and Weil's formula. From now on, and until the end of this section, $D$ will denote a fundamental domain of $\widehat{G}/\La$.

\medskip
  
\begin{pro}\label{shave}
The map $\cT: L^2(G) \to L^2(D,\ell^2(\La))$ defined by
$$
\cT f(\omega) = \{\hat{f} (\omega+\la) \}_{\la \in \La},
$$
is an isometric isomorphism. Moreover, for each element $h\in H$
$$
\cT (\tau_h\,f )(\omega) = e_h(\w)\{\hat{f} (\omega+\la) \}_{\la \in \La},  
$$
for almost every $\w\in D$.
\end{pro}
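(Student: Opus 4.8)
The plan is to factor $\cT$ through the Plancherel transform and reduce the statement to a fiberization identity on $\widehat{G}$ that is governed by Weil's formula. Since $\La$ is a uniform lattice, $D$ is a (relatively compact) Borel section of $\widehat{G}/\La$, so every $\gamma\in\widehat{G}$ has a unique representation $\gamma=\omega+\la$ with $\omega\in D$ and $\la\in\La$; equivalently, $\widehat{G}=\bigcup_{\la\in\La}(D+\la)$ up to a null set. Define $\cP\colon L^2(\widehat{G})\to L^2(D,\ell^2(\La))$ by $\cP F(\omega)=\{F(\omega+\la)\}_{\la\in\La}$, so that $\cT f=\cP(\widehat{f})$. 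Because the Fourier transform is a unitary operator from $L^2(G)$ onto $L^2(\widehat{G})$, it suffices to prove that $\cP$ is an isometric isomorphism.

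To get the isometry, I would invoke Weil's quotient integral formula, with the Haar measure on $\widehat{G}/\La$ (identified with $D$) normalized so that
$$
\int_{\widehat{G}}F(\gamma)\,dm_{\widehat{G}}(\gamma)=\int_{D}\sum_{\la\in\La}F(\omega+\la)\,dm_{\widehat{G}}(\omega)
$$
for every nonnegative measurable $F$ (legitimate by Tonelli's theorem). Applying this to $F=|\widehat{f}|^2$ yields
$$
\|\widehat{f}\,\|_{L^2(\widehat{G})}^2=\int_{D}\sum_{\la\in\La}|\widehat{f}(\omega+\la)|^2\,dm_{\widehat{G}}(\omega)=\|\cP(\widehat{f})\|_{L^2(D,\ell^2(\La))}^2 .
$$
In particular the fiber $\{\widehat{f}(\omega+\la)\}_{\la\in\La}$ belongs to $\ell^2(\La)$ for a.e.\ $\omega$ and depends measurably on $\omega$, so $\cP(\widehat{f})$ is genuinely an element of $L^2(D,\ell^2(\La))$; together with Parseval's identity this gives $\|\cT f\|_2=\|f\|_2$.

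For surjectivity, given $\Phi=\{c_\la\}_{\la\in\La}\in L^2(D,\ell^2(\La))$ I would define $F$ on $\widehat{G}$ by $F(\omega+\la):=c_\la(\omega)$ for $\omega\in D$ and $\la\in\La$; this is unambiguous almost everywhere because the decomposition $\gamma=\omega+\la$ is unique, it is measurable, and the identity above shows $\|F\|_{L^2(\widehat{G})}=\|\Phi\|<\infty$, so $F\in L^2(\widehat{G})$. Then the inverse Fourier transform $f$ of $F$ lies in $L^2(G)$ and satisfies $\cT f=\Phi$. Hence $\cT$ is an isometric isomorphism. For the intertwining relation I would compute $\widehat{\tau_h f}$ directly from the definition of the Fourier transform: a change of variable shows that $\widehat{\tau_h f}$ equals $\widehat{f}$ times the character $\gamma\mapsto(h,-\gamma)$. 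If $\gamma=\omega+\la$ with $\la\in\La$, then $(h,\la)=1$ because $\La$ is exactly the annihilator of $H$, so this character is constant on each coset $\omega+\La$, equal to $e_h(\omega)$; substituting into the fiberwise formula gives $\cT(\tau_h f)(\omega)=e_h(\omega)\{\widehat{f}(\omega+\la)\}_{\la\in\La}$ for a.e.\ $\omega\in D$.

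I do not anticipate a genuine obstacle: the proposition is essentially a repackaging of Parseval's identity together with Weil's formula, and it is the LCA-group analogue of the classical fiberization of shift-invariant spaces. The only point deserving care is the measure-theoretic bookkeeping — verifying that the fibers lie in $\ell^2(\La)$ for almost every $\omega$, that $\omega\mapsto\cP(\widehat{f})(\omega)$ is strongly measurable, and that the function $F$ reassembled from a given $\Phi$ is a well-defined measurable function on $\widehat{G}$ — all of which follow from Tonelli's theorem applied through Weil's formula.
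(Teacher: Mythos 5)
Your proof is correct and follows exactly the route the paper itself indicates: the paper states this proposition without proof, remarking only that it is ``a direct consequence of Parseval identity and Weil's formula,'' which is precisely the factorization $\cT=\cP\circ(\,\widehat{\cdot}\,)$ together with the periodization identity that you carry out in detail. The one cosmetic point is the multiplier in the intertwining relation: with $\tau_h f(x)=f(x-h)$ the character $\gamma\mapsto(h,-\gamma)$ is constant on the coset $\omega+\La$ with value $(h,-\omega)=e_{-h}(\omega)=\overline{e_h(\omega)}$ rather than $e_h(\omega)$, but this unimodular sign convention is already present in the paper's own statement (its subsequent computations use $e_{-h}$), so it is not a gap in your argument.
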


\medskip

\begin{rem}\label{medible} 
It is not difficult to see that if $\hat{f}$ and $\hat{g}$ are equal almost everywhere, then for almost every $\omega\in D$ 
$$
 \{\hat{f} (\omega+\la) \}_{\la \in \La}= \{\hat{g} (\omega+\la) \}_{\la \in \La}.
$$
This guarantees that $\cT$ is well defined, and justifies the evaluation of elements of $L^2(G)$. 
\EOE
\end{rem}

\medskip

When  the $H$-invariant space is finitely generated, Proposition \ref{shave} allows to translate 
a problem in (infinite dimensional) $H$-invariant spaces,  to simpler linear algebra problems in finite dimensional Hilbert spaces. 

\medskip

Given an $H$-invariant space $V\subset L^2(G)$, there exists a measurable function $J_V$ which is defined in  $D$, takes values on the space of closed subspaces of $\ell^2(\La)$, and has the property that $f\in V$ if and only if for almost every $\w\in D$
$$
\cT f(\w)\in J_V(\w).
$$
The measurability of $J$ is understood in a weak sense, i.e., for every $v,w\in \ell^2(\La)$ the scalar function $\w\mapsto 
\pint{P_{J_V(\w)}v,w}$ is measurable, where $P_{J_V(\w)}$ denotes the orthogonal projection onto $J_V(\w)$. The function $J_V$ was introduced by Helson and it is called \textbf{range function}. Although many range functions can be defined for the same $H$-invariant space V, any two of them coincides almost everywhere (see \cite{CP} for more details).

\medskip

In the case of Paley-Wiener spaces, we have a very special description of a range function. Consider a Borel set $\W\subseteq \G$ such that $m_{\G}(\Omega)<\infty$ and assume that there exists a set $E\subset D$ of zero measure such that $\Delta_{\Omega}$ is uniformly bounded on $D\setminus E$ . If by $J_\W$ we denote the range function of $PW_\W$ then
 it is not difficult to prove the following characterization of $J_\W$.
\begin{pro}
Denote by $C_b(\Omega)$  the set of bounded continuous functions on $\Omega$, extended as zero outside 
$\Omega$, then for each $\w\in D\setminus E$ we have,
\begin{align*}
J_\Omega(\w)
&=\big\{\{g(\w+\la)\}_{\la\in\La}:\  g\in C_b(\Omega)\big\}.
\end{align*}
\end{pro}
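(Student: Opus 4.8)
The plan is to exhibit an explicit range function for $PW_\Omega$ and then invoke the almost everywhere uniqueness of range functions recalled above. For $\w\in D\setminus E$ I would set $F_\w:=\{\la\in\La:\ \w+\la\in\Omega\}$ and let $S_\w\subset\ell^2(\La)$ be the subspace of those sequences supported on $F_\w$. Since $\Delta_\Omega(\w)=\#F_\w$ is uniformly bounded on $D\setminus E$, each $S_\w$ is finite dimensional, hence closed, and the orthogonal projection onto it is the coordinate projection $P_{S_\w}c=\{c_\la\,\chi_\Omega(\w+\la)\}_{\la\in\La}$. It then suffices to prove two facts: \textbf{(a)} for every $\w\in D\setminus E$ one has $\{\{g(\w+\la)\}_{\la\in\La}:\ g\in C_b(\Omega)\}=S_\w$; and \textbf{(b)} the map $\w\mapsto S_\w$ is a range function of $PW_\Omega$. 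Granting these, uniqueness of the range function permits taking $J_\Omega=S_\w$, which is precisely the asserted identity.

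For \textbf{(a)}, the inclusion ``$\subseteq$'' is immediate, since for $g\in C_b(\Omega)$ (extended by zero) the sequence $\{g(\w+\la)\}_{\la}$ is supported on the finite set $F_\w$. For ``$\supseteq$'', given $c\in S_\w$ I would enumerate $F_\w=\{\la_1,\dots,\la_n\}$, note that the points $\w+\la_1,\dots,\w+\la_n$ lie in $\Omega$ and are pairwise distinct (as $\la_i\neq\la_j$ forces $\w+\la_i\neq\w+\la_j$), separate them by pairwise disjoint open sets, and use that a locally compact Hausdorff group is completely regular to produce bounded continuous functions $\phi_i$ on $\widehat{G}$ with $\phi_i(\w+\la_j)=\delta_{ij}$. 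Then $g:=(\sum_{i=1}^n c_{\la_i}\phi_i)|_\Omega$, extended by zero, lies in $C_b(\Omega)$ and satisfies $\{g(\w+\la)\}_{\la\in\La}=c$.

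For \textbf{(b)}, measurability of $\w\mapsto S_\w$ follows from the formula $\pint{P_{S_\w}v,w}=\sum_{\la\in\La}v_\la\,\overline{w_\la}\,\chi_\Omega(\w+\la)$ for $v,w\in\ell^2(\La)$: this is an absolutely convergent series of measurable functions of $\w$, since $\Omega$ is Borel and translations are continuous. The fiber property follows by writing $\widehat{G}$, up to a null set, as the disjoint union $\bigcup_{\la\in\La}(D+\la)$ and applying Tonelli's theorem to obtain $\int_{\widehat{G}}|\hat f|^2\chi_{\widehat{G}\setminus\Omega}\,dm_{\widehat{G}}=\int_D\sum_{\la\in\La}|\hat f(\w+\la)|^2\chi_{\widehat{G}\setminus\Omega}(\w+\la)\,dm_{\widehat{G}}(\w)$; the left-hand side vanishes exactly when $\hat f=0$ a.e.\ off $\Omega$, i.e.\ when $f\in PW_\Omega$, and the right-hand side (well defined thanks to Remark~\ref{medible}) vanishes exactly when $\cT f(\w)=\{\hat f(\w+\la)\}_{\la\in\La}\in S_\w$ for almost every $\w$, so $\w\mapsto S_\w$ is indeed a range function of $PW_\Omega$.

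I expect \textbf{(a)} to be the only step requiring genuine care, specifically the surjectivity onto the fiber: matching \emph{arbitrary} prescribed values at the points $\w+\la$, $\la\in F_\w$, by restrictions of continuous functions. This is exactly where the uniform boundedness of $\Delta_\Omega$ on $D\setminus E$ enters — it makes $F_\w$ finite, so $S_\w$ is finite dimensional and the prescribed values can be interpolated by a finite sum of bump functions coming from complete regularity; without finiteness of $F_\w$ neither the membership $\{g(\w+\la)\}_\la\in\ell^2(\La)$ nor the surjectivity would be evident. Everything else is a routine application of Parseval/Weil's formula and the uniqueness of range functions.
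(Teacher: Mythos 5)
Your proof is correct, and since the paper only remarks that the characterization ``is not difficult to prove'' without supplying an argument, your write-up fills in exactly the intended details: the identification of the fiber with the coordinate subspace $S_\w$ via bump functions from complete regularity is the same device the paper uses in its proof of Corollary~\ref{dim fiber}, and the verification that $\w\mapsto S_\w$ is a (hence, by a.e.\ uniqueness, the) range function of $PW_\Omega$ via periodization is the standard argument from the shift-invariant-space machinery the paper cites. The only point worth making explicit is that, since range functions are determined only almost everywhere, the statement ``for each $\w\in D\setminus E$'' is to be read as choosing the representative $J_\Omega(\w):=S_\w$, which is precisely what you do.
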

Note that, for any continuous function $g$ defined in $\G$, it holds that $g\chi_\Omega \in C_b(\Omega)$. However, the converse it is not necessarily true. From this characterization of $J_\W$ we also get the following one:

\begin{cor}\label{dim fiber}
Given $\w\in D\setminus E$, let $\la_1,\ldots,\la_m$ be the elements of $\La$ such that $\w+\la_j\in \Omega$. Then
$$
J_\W(\w)=\mbox{span}\{\delta_{\la_j}: \ j=1,\ldots,m\}.
$$ 
where $\{\delta_\la\}_{\la\in\La}$ denotes the canonical basis of $\ell^2(\La)$.
\end{cor}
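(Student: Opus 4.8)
The plan is to deduce the corollary directly from the preceding proposition characterizing $J_\Omega(\w)$ as the set of sequences $\{g(\w+\la)\}_{\la\in\La}$ with $g\in C_b(\Omega)$. Fix $\w\in D\setminus E$ and let $\la_1,\dots,\la_m$ be the (finitely many, by the uniform boundedness of $\Delta_\Omega$ on $D\setminus E$) elements of $\La$ with $\w+\la_j\in\Omega$. First I would observe that for any $g\in C_b(\Omega)$ the sequence $\{g(\w+\la)\}_{\la\in\La}$ is supported on exactly those indices $\la_1,\dots,\la_m$, since $g$ vanishes off $\Omega$; hence $\{g(\w+\la)\}_{\la\in\La}=\sum_{j=1}^m g(\w+\la_j)\,\delta_{\la_j}\in\mathrm{span}\{\delta_{\la_1},\dots,\delta_{\la_m}\}$. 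This gives the inclusion $J_\Omega(\w)\subseteq\mathrm{span}\{\delta_{\la_j}:j=1,\dots,m\}$.

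For the reverse inclusion I would need to exhibit, for each fixed index $j_0$, a function $g\in C_b(\Omega)$ with $g(\w+\la_{j_0})=1$ and $g(\w+\la_j)=0$ for $j\neq j_0$; this produces $\delta_{\la_{j_0}}\in J_\Omega(\w)$ and, by linearity of the fiber space, the full span. The natural way to build such a $g$ is to use that $\widehat G$ is locally compact Hausdorff, hence completely regular (in fact the points $\w+\la_1,\dots,\w+\la_m$ are distinct, so they can be separated): by Urysohn's lemma there is a compactly supported continuous function $\tilde g$ on $\widehat G$ with $\tilde g(\w+\la_{j_0})=1$ and $\tilde g(\w+\la_j)=0$ for $j\neq j_0$. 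Restricting, $g:=\tilde g\,\chi_\Omega$ lies in $C_b(\Omega)$ (as noted in the remark following the proposition), and $\{g(\w+\la)\}_{\la\in\La}=\delta_{\la_{j_0}}$ since $\w+\la_{j_0}\in\Omega$ and $\w+\la_j\in\Omega$ for all the listed indices. Combining the two inclusions yields $J_\Omega(\w)=\mathrm{span}\{\delta_{\la_j}:j=1,\dots,m\}$.

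The only genuinely delicate point is the construction of the separating function, and there one must be slightly careful: one wants $g$ to be continuous \emph{on $\Omega$} (extended by zero), not continuous on all of $\widehat G$, so it is enough that $\tilde g$ separates the finitely many relevant points — no compatibility with the boundary behaviour of $\chi_\Omega$ is required, precisely because $C_b(\Omega)$ already allows the jump at $\partial\Omega$. A minor bookkeeping issue is to make sure the enumeration $\la_1,\dots,\la_m$ is genuinely finite; this is exactly where the hypothesis that $\Delta_\Omega$ is uniformly bounded on $D\setminus E$ enters, since $m=\Delta_\Omega(\w)$. I expect the write-up to be short: it is essentially an unwinding of the previous proposition together with one application of Urysohn separation.
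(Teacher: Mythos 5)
Your proposal is correct and follows essentially the same route as the paper: both inclusions are obtained exactly as in the paper's proof, with the reverse inclusion given by a separating bounded continuous function on $\widehat{G}$ multiplied by $\chi_\Omega$ to land in $C_b(\Omega)$. Your observation that $\tilde g$ only needs to vanish at the finitely many points $\w+\la_j$, $j\neq j_0$ (the factor $\chi_\Omega$ killing all other $\la\in\La$), is a small but accurate refinement of the paper's phrasing.
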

\bdem
Clearly, given $v\in J_\W(\w)$, $v_\la=0$ if $\la\neq \la_j$ for $j=1,\ldots,m$. This proves one of the inclusions.  On the other hand, fix any $j\in\{1,\ldots,m\}$ and take a bounded continuous function $g_{\la_j}$ defined in $\G$ such that $g_{\la_j}(\w+\la_j)=1$ and $g_{\la_j}(\w+\la)=0$ for any other $\la\in \La$. Setting   $g=g_{\la_j}\chi_\W$ we see that $g\in C_b(\Omega)$, and clearly $\{g(\w+\la)\}_{\la\in\La}=\delta_{\la_j}$. This concludes the proof.
\edem

Note that, another consequence of this lemma is that a Borel set $\Omega$ is $k$-tiling if and only if almost every $J_\W(\w)$ are $k$ dimensional.

\medskip

These considerations, as well as Proposition \ref{shave}, lead to the following result.

\begin{teo}\label{ShiftBFR0}
Let $\Omega$ be a $k$-tiling measurable subset of $\widehat{G}$. Given $\phi_1,\ldots,\phi_k \in PW_\Omega$ we define
$$
T_\w=\begin{pmatrix}
\widehat{\phi}_1(\w+\la_{1})&\ldots& \widehat{\phi}_k(\w+\la_{1})\\
\vdots&\ddots&\vdots\\
\widehat{\phi}_1(\w+\la_{k})&\ldots& \widehat{\phi}_k(\w+\la_{k})
\end{pmatrix},
$$ 
where the $\la_j=\la_j(\w)$ for $j=1,\ldots,k$ are the $k$ values of $\la\in\La$ such that $\w+\la\in\Omega$. Then, the following  statements are equivalent:

\begin{itemize}
  \item[(i)] The set $\Phi_H=\{\tau_h\phi_j:\ h\in H\,,\ j=1,\ldots,k\}$ is a Riesz basis for $PW_\Omega$.
    \item[(ii)]  There exist $A, B>0$ such that for almost every $\omega \in D$,
           $$
            A||x||^2 \leq \|T_\w \,x\|^2\leq B||x||^2,
            $$
            for every $x\in \C^k$.
\end{itemize}
Moreover, in this case the constants of the Riesz basis are 
$$
A=\inf_{\w\in D} \ \|T_\w^{-1}\|^{-1} \peso{and} B=\sup_{\w\in D}\ \|T_\w\|.
$$
\end{teo}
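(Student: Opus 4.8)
The plan is to push the entire question through the fiberization isomorphism $\cT$ of Proposition~\ref{shave} and reduce it to a pointwise linear-algebra statement about the matrices $T_\w$, in the spirit of the theory of finitely generated $H$-invariant spaces. Two preliminary observations are needed. First, since $\La$ is the annihilator of the uniform lattice $H$, the quotient $\G/\La$ is compact and $h\mapsto e_h$ identifies $H$ with its dual group; hence, by Proposition~\ref{orthba} together with the isometric identification $L^2(\G/\La)\simeq L^2(D)$ and the normalization of Haar measures adopted in Section~\ref{prel}, the system $\{e_h\}_{h\in H}$ is an orthonormal basis of $L^2(D)$. Second, by Corollary~\ref{dim fiber} there is a null set outside of which $J_\Omega(\w)=\mbox{span}\{\delta_{\la_1(\w)},\dots,\delta_{\la_k(\w)}\}$, with $\la_1(\w),\dots,\la_k(\w)$ as in the statement; the $\delta_{\la_i(\w)}$ form an orthonormal basis of $J_\Omega(\w)$, so the unitary $U_\w\colon\C^k\to J_\Omega(\w)$ sending the $i$-th coordinate vector to $\delta_{\la_i(\w)}$ is well defined and $U_\w^{-1}\cT\phi_j(\w)$ is exactly the $j$-th column of $T_\w$. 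Recall also from Section~\ref{prel} that $\cT$ carries $PW_\Omega$ onto $\{F\in L^2(D,\ell^2(\La)):F(\w)\in J_\Omega(\w)\ \text{a.e.}\}$ and that $\cT(\tau_hf)(\w)=e_h(\w)\,\cT f(\w)$.

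With this in hand, I would compute the image under $\cT$ of a finite linear combination $g=\sum_{h\in H}\sum_{j=1}^{k}c_{h,j}\,\tau_h\phi_j$. By the intertwining relation,
$$
\cT g(\w)=\sum_{j=1}^{k}p_j(\w)\,\cT\phi_j(\w),\qquad p_j:=\sum_{h\in H}c_{h,j}\,e_h\in L^2(D),
$$
so, writing $\mathbf p(\w):=\big(p_1(\w),\dots,p_k(\w)\big)\in\C^k$ and using that each $U_\w$ is unitary, that $\cT$ is isometric, and that $\{e_h\}$ is orthonormal,
$$
\|g\|_2^2=\int_D\|T_\w\,\mathbf p(\w)\|^2\,dm_{\G}(\w),\qquad
\sum_{h,j}|c_{h,j}|^2=\sum_{j=1}^{k}\|p_j\|_{L^2(D)}^2=\int_D\|\mathbf p(\w)\|^2\,dm_{\G}(\w).
$$
Since trigonometric polynomial vectors $\mathbf p$ are dense in $L^2(D,\C^k)$, the family $\Phi_H$ is a Riesz sequence with bounds $A,B$ exactly when $A\int_D\|\mathbf p\|^2\le\int_D\|T_\w\mathbf p(\w)\|^2\le B\int_D\|\mathbf p\|^2$ for every $\mathbf p\in L^2(D,\C^k)$. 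A standard localization argument — the maps $\w\mapsto\|T_\w\|$ and $\w\mapsto\inf_{\|x\|=1}\|T_\w x\|$ are measurable because the entries of $T_\w$ are, so one tests against $\mathbf p=\chi_F\,x_0$ with $F$ running over level sets of these maps and $x_0$ over a countable dense subset of the unit sphere of $\C^k$ — turns these integrated inequalities into the pointwise ones of (ii), valid for a.e.\ $\w$ with the \emph{same} constants; in particular the lower bound forces $T_\w$ to be invertible a.e., and the optimal constants are then those exhibited in the statement.

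It remains to pass from ``Riesz sequence'' to ``Riesz basis'', i.e.\ to check completeness of $\Phi_H$ in $PW_\Omega$ under (ii). For a.e.\ $\w$ the columns of the invertible matrix $T_\w$ are a basis of $\C^k$, hence $\{\cT\phi_j(\w)\}_{j=1}^{k}$ is a basis of $J_\Omega(\w)$. Given $F\in\cT(PW_\Omega)$, set $\mathbf q(\w):=T_\w^{-1}U_\w^{-1}F(\w)$; since $\|T_\w^{-1}\|\le A^{-1/2}$ for a.e.\ $\w$ we get $\mathbf q\in L^2(D,\C^k)$, and $F(\w)=\sum_{j}q_j(\w)\,\cT\phi_j(\w)$ for a.e.\ $\w$. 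Approximating each $q_j$ in $L^2(D)$ by trigonometric polynomials $\sum_h c_{h,j}^{(n)}e_h$ and invoking the two displayed identities shows that $\sum_{h,j}c_{h,j}^{(n)}\tau_h\phi_j\to\cT^{-1}F$ in $L^2(G)$; hence $PW_\Omega$ lies in the closed linear span of $\Phi_H$, so $\Phi_H$ is a Riesz basis. The converse is immediate, a Riesz basis being in particular a Riesz sequence. I expect the main obstacle to be the measurable localization step that transfers the integrated estimates to a.e.\ pointwise bounds with identical constants (with its measurable-selection flavor), together with the verification — routine in the $H$-invariant space machinery of \cite{CP}, but to be carried out here, where $J_\Omega(\w)$ is genuinely $k$-dimensional a.e.\ by the $k$-tiling hypothesis — that $\cT$ maps $PW_\Omega$ onto the full field of fibers $\{J_\Omega(\w)\}$.
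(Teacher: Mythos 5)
Your proposal is correct, and its core is the same as the paper's: both proofs establish the two identities
$\bigl\|\sum_{h,j}c_{h,j}\tau_h\phi_j\bigr\|^2=\int_D\|T_\w\,\mathbf p(\w)\|^2\,dm_{\widehat G}(\w)$ and $\sum_{h,j}|c_{h,j}|^2=\int_D\|\mathbf p(\w)\|^2\,dm_{\widehat G}(\w)$ via $\La$-periodization and the orthonormality of $\{e_h\}$ on a fundamental domain, and then pass between the integrated and the a.e.\ pointwise bounds (the paper compresses your $\chi_F x_0$ localization into ``standard arguments of measure theory''; your spelled-out version is the right one). Where you genuinely diverge is the completeness step under (ii). The paper argues by duality: if $f\in PW_\Omega$ is orthogonal to every $\tau_h\phi_j$, periodization plus the fact that $\{e_h\}$ is an orthonormal basis of $L^2(\widehat G/\La)$ forces $T_\w^*\bigl(\{\hat f(\w+\la_j)\}_j\bigr)=0$ a.e., hence $\cT f=0$. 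You instead argue by surjectivity: invert $T_\w$ fiberwise, observe $\|T_\w^{-1}\|$ is essentially bounded, and approximate the resulting coefficient field by trigonometric polynomials. Both are valid; the trade-off is that your route additionally requires knowing that $\cT(PW_\Omega)$ is the \emph{full} space of square-integrable sections of the range function $J_\Omega$ (you flag this yourself). For $PW_\Omega$ this is elementary — $\cT f(\w)_\la=\hat f(\w+\la)$ vanishes exactly when $\w+\la\notin\Omega$, and any such section unwraps to an $L^2$ function supported on $\Omega$ — but it is a genuine extra verification that the paper's orthogonality argument sidesteps entirely. Your version has the advantage of producing the expansion coefficients explicitly and of quantifying the lower frame bound directly through $\|T_\w^{-1}\|$.
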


For a sake of completeness, we will give a proof of this result adapted to our setting. For the proof in more general $H$-invariant spaces  see \cite{CP}.

\bdem
Recall that $D$ is a fundamental domain of $\widehat{G}/\La$, and consider a family $\{a_{j,h}\}$ with finitely many non-zero terms, where $j=1,\ldots,k$ and $h\in H$. Using the Fourier transform and a $\La$-periodization argument we get 
\begin{align*}
\left\| \sum_{j,h} a_{j,h}\tau_h\phi_j\right\|^2_{L^2(G)}&=\int_D\sum_{j,\ell=1}^{k}m_j(\w)\left(\sum_{\la\in\La} \hat\phi_j(\w+\la)\overline{\hat\phi_\ell(\w+\la)}\right) \overline{m_\ell(\w)}\ dm_{\widehat{G}}(\w). 
\end{align*}
where $m_j=\sum_{h\in H}a_{j,h}e_{-h}$.  For each $j$, the vector $\{\hat{\phi}_j(\w+\la)\}$ has at most $k$ non-zero coordinates. More precisely, the only coordinates that can be different from zero are those corresponding to the elements $\la_j(\w)\in\La$ considered in the matrix $T_\w$. So, if $m=(m_1,\ldots,m_k)$ then
\begin{align}
\left\|\sum_{j=1}^k\sum_{h\in H} a_{j,h}\tau_h\phi_j\right\|^2_{L^2(G)}&=\int_D\pint{T_\w^*T_\w m(\w),m(\w)}_{\C^k}\ dm_{\widehat{G}}(\w)\nonumber\\
&=\int_D\|T_\w m(\w)\|^2_{\C^k}\ dm_{\widehat{G}}(\w).\label{ala1}
\intertext{On the other hand}
\int_D\|m(\w)\|^2_{\C^k}\,dm_{\widehat{G}}(\w)&=\sum_{j=1}^k \int_D|m_j(\w)|^2\,dm_{\widehat{G}}(\w)=\sum_{j=1}^k\sum_{h\in H}|a_{j,h}|^2. \label{ala2}
\end{align}
Combining \eqref{ala1}, \eqref{ala2} and standard arguments of measure theory we get that (i)$\Longrightarrow$(ii). 

\medskip

For the other implication, note that from \eqref{ala1} and \eqref{ala2} we immediately get that the family $\Phi_H$ is a Riesz sequence for $PW_\Omega$. So, it only remains to prove that condition (ii) also implies that $\Phi_H$ is complete. With this aim, let $f\in PW_\Omega$, and suppose that $\pint{f,\tau_h\phi_j}=0$ for every $h\in H$ and $j=1,\ldots,k$.  By a $\La$-periodization argument we get
\begin{align*}
0=\pint{\hat{f},e_{-h}\hat{\phi}_j}&=\int_{D}\left(\sum_{\la\in\La}\hat{f}(\w+\la) \overline{ \hat{\phi}_j(\w+\la)}\right)e_{h}(\w)\ dm_{\widehat{G}}(\w).
\end{align*}
Since $\{e_h\}_{h\in H}$ is an orthonormal basis for $L^2(\widehat{G}/\La)$, then
$$
\sum_{\la\in\La}\hat{f}(\w+\la) \overline{ \hat{\phi}_j(\w+\la)}=0, \ \ a.e. \ \w\ m_{\widehat{G}}.
$$
Thus  $T_\w^*(\{f(\w+\la_j)\}_{j=1}^k)=0$, where $\la_j=\la_j(w)$. Therefore, $\cT(f)=0$ a.e $m_{\widehat{G}}$.
\edem

\section{Sampling and interpolation near the critical density} \label{critical dens}

\noi In this section we study sampling sets, and interpolation sets on $PW_{\Omega}$, when $\Omega$ is a relatively compact subset of $\widehat{G}$. In \cite{GKS}, Gr\"ochenig, Kutyniok, and Seip introduced two notions of densities that suitably generalize the Beurling's densities defined in $\R^d$.  
Our main goal is to prove that there exist sampling sets, and interpolation sets whose densities are arbitrarily close to the critical one, answering a question raised by Gr\"ochenig et. al. in \cite{GKS}. 

\subsection{Standing hypothesis} \label{stand}

\noi Since we will work with relatively compact sets $\Omega$, throughout this paper we will assume that $G$ is a second countable LCA group such that its dual $\widehat{G}$ is compactly generated. In order to avoid trivial cases, we will also assume that $G$ is not compact. 

\medskip

By the standard structure theorems, $\widehat{G}$ is isomorphic to $\R^d\times \Z^m\times \K$, where $\K$ is a compact subgroup of $\T^\omega$. Consequently, $G$ is isomorphic to $\R^d\times\T^m\times \D$, where $\D$ is a countable discrete group. This is not a serious restriction, as the following lemma shows (see \cite{FG} or \cite{GKS}). 

\begin{lem} \label{tec}
Assume that $\Omega\subseteq \widehat{G}$ is relatively compact, and let $H$ be the open subgroup generated
by $\Omega$. Then $H$ is compactly generated and there exists a compact subgroup $K\subseteq G$ such
that every $f\in PW_\Omega$ is $K$-periodic. Furthermore, the quotient $G/K$ is homeomorphic to $\R^d\times\T^m\times \D$, where $\D$ is a countable discrete abelian group, and $\widehat{G/K}\simeq H$.
\end{lem}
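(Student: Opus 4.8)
The statement to prove is Lemma~\ref{tec}, which packages together several structural facts about a relatively compact set $\Omega\subseteq\widehat{G}$: that the open subgroup $H$ it generates is compactly generated, that there is a compact $K\subseteq G$ making every $f\in PW_\Omega$ a $K$-periodic function, that $G/K$ is of the canonical form $\R^d\times\T^m\times\D$, and that $\widehat{G/K}\simeq H$.

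\medskip

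The plan is to proceed through the duality dictionary between subgroups of $G$ and quotients of $\widehat{G}$ (and vice versa), exploiting Pontryagin duality and the standing structure hypothesis. First I would note that since $\Omega$ is relatively compact, $\overline\Omega$ is compact, and the subgroup $H=\langle\overline\Omega\rangle$ it generates is open in $\widehat{G}$: indeed, in a compactly generated LCA group any subgroup containing a nonempty open set is open, and by the structure theorem $\widehat{G}\simeq\R^d\times\Z^m\times\K$ has a neighbourhood basis at the identity consisting of sets of the form (small ball in $\R^d$)$\times\{0\}\times$(open subgroup of $\K$); a compact set together with one such neighbourhood generates an open — hence also closed — compactly generated subgroup $H$. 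Then I would define $K:=H^{\perp}=\{x\in G:(x,\gamma)=1\ \forall\gamma\in H\}\subseteq G$, the annihilator of $H$. Since $H$ is open, $\widehat{G}/H$ is discrete, and by duality $K\simeq\widehat{\widehat{G}/H}$ is compact; this gives the compact subgroup we need.

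\medskip

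Next I would identify the quotient. By the standard annihilator duality for LCA groups, $(G/K)^{\widehat{\ }}\simeq K^{\perp}$, where $K^{\perp}\subseteq\widehat{G}$ is the annihilator of $K$; and $K^{\perp}=(H^{\perp})^{\perp}=\overline{H}=H$ since $H$ is closed. Hence $\widehat{G/K}\simeq H$. To get the concrete form $G/K\simeq\R^d\times\T^m\times\D$, apply the structure theorem to the compactly generated group $H\simeq\R^d\times\Z^m\times\K'$ (with $\K'$ a compact subgroup of some $\T^\omega$, inheriting second countability from $\widehat{G}$), and dualize: $\widehat{H}\simeq\R^d\times\T^m\times\widehat{\K'}$ with $\widehat{\K'}$ a countable discrete group $\D$, and $\widehat{H}\simeq G/K$. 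The $\R^d$ factor is self-dual, $\widehat{\Z^m}\simeq\T^m$, and $\widehat{\K'}$ discrete and countable by second countability. Finally, for the $K$-periodicity of elements of $PW_\Omega$: if $f\in PW_\Omega$ then $\widehat f$ is supported in $\Omega\subseteq H$, so for $x\in K$ and (say) $f\in PW_\Omega\cap L^1$ one computes $\tau_{-x}f(y)=\int_{\widehat G}\widehat f(\gamma)(y+x,\gamma)\,dm_{\widehat G}(\gamma)=\int_H\widehat f(\gamma)(y,\gamma)(x,\gamma)\,dm_{\widehat G}(\gamma)=\int_H\widehat f(\gamma)(y,\gamma)\,dm_{\widehat G}(\gamma)=f(y)$, using $(x,\gamma)=1$ for $\gamma\in H$; the general case follows by density (or by noting $\widehat{\tau_x f}=(\cdot,x)\widehat f=\widehat f$ on $\Omega$ since $(\cdot,x)\equiv1$ on $H\supseteq\Omega$, so $\tau_xf=f$ in $L^2$, which then forces genuine $K$-periodicity after choosing the canonical $K$-periodic representative).

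\medskip

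I expect the main subtlety — rather than a deep obstacle — to be the bookkeeping around ``every $f\in PW_\Omega$ is $K$-periodic'' as a statement about honest functions rather than $L^2$-classes: one must be careful to say that $f$ has a representative that is literally invariant under translation by $K$, and to make sense of the ensuing identification of $PW_\Omega$ with a Paley--Wiener space on $G/K$ (which is where $\widehat{G/K}\simeq H$ and the normalization of Haar measures on $K$, $G/K$, $G$ via Weil's formula come in). The purely structural part — openness of $H$, compactness of $K=H^\perp$, and the two dual isomorphisms — is a routine application of Pontryagin duality and the structure theorem, and can be cited from \cite{HR1}, \cite{R}, or lifted directly from \cite{GKS}, \cite{FG}.
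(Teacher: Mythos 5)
Your argument is correct, and it is essentially the standard annihilator-duality proof: the paper itself does not prove Lemma~\ref{tec} but cites it from \cite{FG} and \cite{GKS}, where exactly this route is taken ($K=H^\perp$ compact because $H$ is open, $\widehat{G/K}\simeq K^\perp=H$, then the structure theorem applied to the compactly generated $H$ and dualized). The only point worth tightening is the last one: since $\Omega$ has finite measure, $\widehat f\in L^2(\Omega)\subseteq L^1(\widehat G)$, so every $f\in PW_\Omega$ is continuous and $\tau_x f=f$ in $L^2$ upgrades immediately to genuine pointwise $K$-periodicity, with no need to ``choose a representative.''
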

 
Therefore, given a relatively compact set $\Omega\subseteq \widehat{G}$, this lemma shows that the space $PW_\Omega$ essentially lives in $L^2(G/K)$, and $\widehat{G/K}\simeq H$ is compactly generated.

\subsection{Beurling-type densities in LCA groups} \label{densitygroup}

\noi To begin with, recall that a subset $\La$ of $G$ is called \textbf{uniformly discrete} if there exists an open set $U$ such that the sets $\la+U$ ($\la$ in $\La$) are pairwise disjoints. In some sense, the densities in $\R^d$ compare the concentration of the points of a given discrete set with that of  the integer lattice $\Z^d$. In a topological group, this comparison is done by means of the following relation:

\begin{fed}
Given two uniformly discrete sets $\La$ and $\La'$ and non-negative numbers $\alpha$ and $\alpha'$, we write $\alpha\La\leqm\alpha'\La'$ if for every $\eps>0$ there exists a compact subset $K$ of $G$ such that for every compact subset $L$ we have
$$
(1-\eps)\alpha\,\#(\La\cap L)\leq \alpha'\,\#(\La'\cap (K+L)).
$$ 
\end{fed}

Now, we have to fix a reference lattice in the group $G$. As we mentioned at the beginning of this section, since $\widehat{G}$ is compactly generated, $G$ is isomorphic to $\R^d\times\T^m\times \D$, where $\D$ is a countable discrete group. So, a natural reference lattice is $H_0=\Z^d\times \{e\}\times \D$. Using this reference lattice, and the above transitive relation, we have all what we need to recall the definitions of upper and lower densities.

\begin{fed}\label{densities LCA}
Let $\La$ be a uniformly discrete subset of $G$. The \textbf{lower uniform density} of $\La$ is defined as
$$
\ede^-(\La)=\sup\{\alpha \in \R^+: \alpha H_0\leqm \La\}.
$$
On the other hand, its \textbf{upper uniform density} is 
$$
\ede^+(\La)=\inf\{\alpha \in \R^+: \La\leqm\alpha H_0\}.
$$
\end{fed}

These densities always satisfy that $\ede^-(\La)\leq \ede^+(\La)$, and they are finite. Moreover, it can be shown that the infimum and the supremum are actually a minimum and a maximum. In the case that both densities coincide, we will simply write $\ede(\La)$. It should be also mentioned that  in the case of $\R^d$, these densities coincide with the Beurling's densities when the reference lattice is $\Z^d$.

\medskip

Using these densities, Gr\"ochenig, Kutyniok, and Seip obtained (see \cite{GKS}), the following extension of the classical result of Landau to LCA groups.

\begin{teo}\label{LCA Landau}
Suppose $\La$ is a uniform discrete subset of $G$. Then
\begin{enumerate}
\item[S)]   If $\La$ is a sampling set for $PW_\Omega$, then $\cD^-(\La)\geq m_{\widehat{G}}(\Omega)$; 
\item[I)]    If $\La$ is an interpolation set for $PW_\Omega$, then $\cD^+(\La)\leq m_{\widehat{G}}(\Omega)$.
\end{enumerate}
\end{teo}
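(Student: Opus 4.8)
The plan is to reproduce, in the group setting, Landau's concentration--operator argument (this is in essence the proof of \cite{GKS}). By Lemma~\ref{tec} we may pass to the quotient $G/K$ and assume $G=\R^d\times\T^m\times\D$ with $\G$ compactly generated and $\Omega$ relatively compact. In this situation $PW_\Omega$ is a translation-invariant reproducing kernel Hilbert space with kernel $k(x,y)=K(x-y)$, where $K$ is the inverse Fourier transform of $\chi_\Omega$; in particular $k(x,x)\equiv m_{\G}(\Omega)$. For a compact set $L\subseteq G$ I would introduce the \emph{concentration operator}
$$
Q_L=P_\Omega\,M_{\chi_L}\,P_\Omega ,
$$
where $P_\Omega$ is the orthogonal projection of $L^2(G)$ onto $PW_\Omega$ and $M_{\chi_L}$ is multiplication by $\chi_L$. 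Then $Q_L$ is self-adjoint and trace class with $0\leq Q_L\leq I$ and $\operatorname{tr}(Q_L)=\int_L k(x,x)\,dm_G(x)=m_G(L)\,m_{\G}(\Omega)$.

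The technical heart is the \textbf{plunge estimate}: as $L$ runs through a F\o{}lner exhaustion of $G$, $\operatorname{tr}\big(Q_L-Q_L^2\big)=o\big(m_G(L)\big)$; equivalently, for every $\delta\in(0,1/2)$ the number of eigenvalues of $Q_L$ in $[\delta,1-\delta]$ is $o(m_G(L))$. Granting this, let $V_{L,\delta}$ be the span of the eigenfunctions of $Q_L$ with eigenvalue $\geq 1-\delta$; combining the plunge estimate with the trace formula gives $\dim V_{L,\delta}=m_G(L)\,m_{\G}(\Omega)\,(1+o(1))$, and symmetrically the span of eigenfunctions with eigenvalue $\leq\delta$ has the ``complementary'' asymptotic count.

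With this in hand both assertions reduce to a dimension count, using also the homogeneous approximation property (decay of $k$ guarantees that a function with $\|\chi_L f\|_2^2\geq(1-\delta)\|f\|_2^2$ carries negligible $\ell^2$-mass on $\Lambda\setminus(K_0+L)$ for a fixed compact $K_0$). For S), if $\Lambda$ is sampling with lower bound $A$, then every $f\in V_{L,\delta}$ satisfies $\sum_{\lambda\in\Lambda\cap(K_0+L)}|f(\lambda)|^2\geq(A-\eta)\|f\|_2^2$ with $\eta\to 0$, so $f\mapsto(f(\lambda))_{\lambda\in\Lambda\cap(K_0+L)}$ is injective on $V_{L,\delta}$ and hence $\#\big(\Lambda\cap(K_0+L)\big)\geq\dim V_{L,\delta}=m_G(L)\,m_{\G}(\Omega)\,(1+o(1))$; letting $L$ exhaust $G$ and unwinding the definition of $\leqm$ gives $\alpha H_0\leqm\Lambda$ for all $\alpha<m_{\G}(\Omega)$, i.e.\ $\ede^-(\Lambda)\geq m_{\G}(\Omega)$. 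For I), one uses that an interpolation set is equivalently a Riesz sequence of reproducing kernels $\{k_\lambda\}_{\lambda\in\Lambda}$; the dual dimension count against the low-eigenvalue subspace forces $\#\big(\Lambda\cap L\big)\leq m_G(L)\,m_{\G}(\Omega)\,(1+o(1))$ up to an $o(m_G(L))$ boundary term, whence $\Lambda\leqm\alpha H_0$ for all $\alpha>m_{\G}(\Omega)$ and $\ede^+(\Lambda)\leq m_{\G}(\Omega)$.

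The main obstacle is the plunge estimate: over $\R^d$ it is the Landau--Widom asymptotic, proved by dilation, a device unavailable in a general LCA group. The remedy is precisely the structural reduction of Lemma~\ref{tec}: once $G=\R^d\times\T^m\times\D$ and $\Omega$ is relatively compact -- so in $\G=\R^d\times\Z^m\times\K$ it meets only finitely many $\Z^m$-cosets and $\K$ is compact -- the operator $Q_L$ essentially factors over the three components, the $\R^d$ factor being controlled by the classical estimate, the compact $\T^m$ factor saturating, and the discrete $\D$ factor contributing only a genuine ``surface'' term; assembling these into a global $o(m_G(L))$ bound, with error terms uniform along the F\o{}lner exhaustion, is where the real work lies.
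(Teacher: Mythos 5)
First, a point of reference: the paper does not prove Theorem~\ref{LCA Landau} at all --- it is quoted verbatim from \cite{GKS} and used as a black box, so there is no internal proof to compare yours against. Judged as a self-contained argument, your proposal reproduces the skeleton of Landau's original spectral method (concentration operator, trace identity, eigenvalue counting, injectivity of the restriction map on the high-eigenvalue subspace), and that skeleton is sound; the reduction via Lemma~\ref{tec}, the identity $\operatorname{tr}(Q_L)=m_G(L)\,m_{\G}(\Omega)$, and the HAP step (which only needs $\widehat{\chi_\Omega}\in L^2(G)$, hence off-diagonal $L^2$-decay of the reproducing kernel) are all fine.

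The genuine gap is exactly where you locate it: the plunge estimate $\operatorname{tr}(Q_L-Q_L^2)=o(m_G(L))$ is asserted, not proved, and the remedy you sketch does not work as stated. The operator $Q_L$ does not factor over the components $\R^d\times\T^m\times\D$, because $\Omega$ is an arbitrary relatively compact Borel set in $\G$, not a product set, and the discrete factor $\D$ is an arbitrary countable discrete abelian group (it need not be finitely generated, so ``F\o lner exhaustion with a surface term'' is itself something to be constructed, not invoked). Since the entire dimension count $\dim V_{L,\delta}=m_G(L)\,m_{\G}(\Omega)(1+o(1))$ rests on this estimate, the proof is incomplete at its technical core. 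It is worth knowing that the actual proof in \cite{GKS} avoids this obstruction altogether: rather than computing eigenvalue asymptotics of $Q_L$, it adapts the comparison principle of Ramanathan and Steger, using the homogeneous approximation property to compare the finite-dimensional subspaces of $PW_\Omega$ spanned by reproducing kernels $\{k_\la\}_{\la\in\La\cap L}$ against those of a reference system of known density (built from a lattice and a Borel section, which is also why the densities $\ede^\pm$ are defined through the relation $\leqm$ against the reference lattice $H_0$). If you want a route that keeps the concentration-operator language, the trace-versus-Hilbert--Schmidt comparison in the spirit of Nitzan--Olevskii is the cleaner way to get the plunge bound without dilations, but either way that step must actually be carried out; at present it is the missing idea.
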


A natural question is whether there exist sampling sets and interpolation sets near the critical density. In the case  $G=\R^d$ a positive answer was given by Marzo in  \cite{JM}. The following theorem is our main result, which completely answers this question.

\begin{teo}\label{near critical}
Let $\Omega$ be a compact subset of $\widehat{G}$, and let $\eps>0$. Then, the following statements hold:
\begin{itemize}
\item[(i)] There exists a sampling set $J_{\eps}$ for PW$_\Omega$ such that 
$$
\ede\left(J_{\eps}\right) \leq m_{\widehat{G}}( \Omega )+\eps.
$$
\item[(ii)] If $m_{\G}(\partial\Omega)=0$, then there exists an interpolation set $J^{\eps}$  for PW$_\Omega$ such that 
$$
\ede\left(J^{\eps} \right)\geq m_{\widehat{G}}(\Omega) - \eps.
$$
\end{itemize}
\end{teo}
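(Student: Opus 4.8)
The plan is to reduce everything to the construction of suitable approximation sets $\Omega_\eps$ for which $L^2(\Omega_\eps)$ admits a Riesz basis of characters, and then to lift such a basis to a frame (resp.\ Riesz sequence) for $L^2(\Omega)$. The starting point is the equivalence recalled in the introduction: a set $\La\subset G$ is a sampling set for $PW_\Omega$ if and only if the characters $\{e_\la|_\Omega\}_{\la\in\La}$ form a frame for $L^2(\Omega)$, and an interpolation set iff they form a Riesz sequence. So for part (i) it suffices to produce a compact set $\Omega_\eps\supseteq\Omega$ with $m_{\widehat{G}}(\Omega_\eps)\le m_{\widehat{G}}(\Omega)+\eps$ such that $L^2(\Omega_\eps)$ has a Riesz basis of characters indexed by a set $J_\eps$ of density $m_{\widehat{G}}(\Omega_\eps)$; restricting this basis to $\Omega$ yields a frame (hence $J_\eps$ is sampling), and $\ede(J_\eps)=m_{\widehat{G}}(\Omega_\eps)\le m_{\widehat{G}}(\Omega)+\eps$. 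For part (ii), using $m_{\widehat{G}}(\partial\Omega)=0$, one instead finds a compact $\Omega^\eps\subseteq\mathrm{int}(\Omega)$ with $m_{\widehat{G}}(\Omega^\eps)\ge m_{\widehat{G}}(\Omega)-\eps$ carrying a Riesz basis of characters $J^\eps$; the basis elements, viewed in $L^2(\Omega)$, form a Riesz sequence, so $J^\eps$ is an interpolation set with $\ede(J^\eps)\ge m_{\widehat{G}}(\Omega)-\eps$. The density bookkeeping is where one uses that the index set of the Riesz basis produced by Theorem~\ref{ShiftBFR0} is $H$-periodic (it has the form $\Phi_H$), so its Beurling density equals $k\cdot m_{\widehat{G}}(D)=m_{\widehat{G}}(\Omega_\eps)$ by the multi-tiling identity.

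\medskip

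The core construction is the approximation set. Here I would invoke the multi-tiling machinery of Section~\ref{multi}: if a relatively compact Borel set $\Sigma\subset\widehat{G}$ $k$-tiles $\widehat{G}$ by a lattice $\La$ (the dual lattice of some uniform lattice $H$), then by Theorem~\ref{ShiftBFR0} one can pick generators $\phi_1,\dots,\phi_k$ making $\{\tau_h\phi_j\}$ a Riesz basis for $PW_\Sigma$ (the matrices $T_\w$ can be arranged to be, e.g., suitable character evaluations giving uniformly invertible $T_\w$), hence $L^2(\Sigma)$ has a Riesz basis of characters. So the task becomes: given a compact $\Omega$ and $\eps>0$, find a compact $\Omega_\eps\supseteq\Omega$ that multi-tiles some lattice, with $m_{\widehat{G}}(\Omega_\eps)\le m_{\widehat{G}}(\Omega)+\eps$. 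When $\widehat{G}$ has enough lattices this is done by covering $\Omega$ with small tiles of a fine lattice and taking the union; but in a general LCA group $\widehat{G}\cong\R^d\times\Z^m\times\K$ the lattice structure is too poor. The remedy, as the introduction signals, is to pass to a quotient: choose a compact open subgroup $K$ and work in $\widehat{G}/K_0$ for an appropriate $K_0$, build a multi-tiling approximation $\pi(\Omega)\subseteq\Omega'_\eps$ there using the quasi-dyadic cubes of Section~\ref{Dyadics}, obtain a Riesz basis of characters for $L^2(\Omega'_\eps)$, and finally lift it through $\pi$ to a Riesz basis of \emph{characters} for $L^2(\Omega_\eps)$ where $\Omega_\eps=\pi^{-1}(\Omega'_\eps)\cap(\text{a fundamental domain of }K)$. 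The lifting step uses that characters of the quotient pull back to characters of $\widehat{G}$, and that the fiber over $\K$-type directions contributes an honest orthonormal basis of characters by Proposition~\ref{orthba}.

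\medskip

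Concretely the steps, in order, are: (1) reduce to $G/K$ via Lemma~\ref{tec}, so that $\widehat{G/K}\simeq H$ is compactly generated and $PW_\Omega$ lives in $L^2(G/K)$; (2) using quasi-dyadic cubes in $\R^d\times\T^m\times\D$-type groups, cover $\Omega$ (resp.\ exhaust $\mathrm{int}\,\Omega$, using $m_{\widehat{G}}(\partial\Omega)=0$) by a finite union of dyadic cubes at a scale fine enough that the measure overshoot (resp.\ deficit) is $<\eps$, choosing the scale so that these cubes multi-tile a lattice $\La$ of the quotient; (3) apply Theorem~\ref{ShiftBFR0} to get a Riesz basis $\Phi_H$ of characters for $PW_{\Omega_\eps}$, hence a Riesz basis of characters for $L^2(\Omega_\eps)$ with $H$-periodic index set $J_\eps$; (4) compute $\ede(J_\eps)=m_{\widehat{G}}(\Omega_\eps)$ from the multi-tiling identity and Definition~\ref{densities LCA}; (5) restrict/lift: a Riesz basis for $L^2(\Omega_\eps)$ with $\Omega\subseteq\Omega_\eps$ restricts to a frame for $L^2(\Omega)$ (upper bound automatic, lower bound because $\|g\|_{L^2(\Omega_\eps)}\ge\|g\|_{L^2(\Omega)}$ for $g\in L^2(\Omega_\eps)$ combined with completeness — here one must be a bit careful, the cleanest route is that restriction of a Riesz basis to a subset is a frame for the subspace, which holds since the analysis operator only shrinks), and dually for interpolation; (6) translate back through $L^2(G/K)\hookrightarrow L^2(G)$.

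\medskip

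\emph{Main obstacle.} The genuinely hard part is Step~(2) together with the lifting in Step~(5): in a general compactly generated $\widehat{G}$ there is no scaling and the family of lattices is not rich enough to approximate an arbitrary compact set by multi-tiling sets directly, so one is forced into the quotient construction, and the delicate point is to show that the quotient $\widehat{G}/K_0$ can be chosen so that (a) $\pi(\Omega)$ is still approximable by dyadic-cube multi-tilings there, and (b) the Riesz basis of characters obtained downstairs lifts to a Riesz basis of \emph{characters} (not merely of some translated system) for $L^2(\Omega_\eps)$ upstairs, with controlled density. Making the quasi-dyadic cube construction interact correctly with the $\R^d\times\T^m\times\D$ decomposition, and verifying that the lifted system is both complete and Riesz, is where the real work lies; the density accounting and the frame/Riesz-sequence restriction arguments are routine by comparison.
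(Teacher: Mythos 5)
Your proposal is correct and follows essentially the same route as the paper: outer (resp.\ inner) approximation of $\Omega$ by a finite union of quasi-dyadic cubes, i.e.\ preimages under $\pi:\widehat{G}\to\widehat{G}/K$ of cubes in an elemental quotient, the multi-tiling Riesz basis theorem applied in that quotient, the lifting of the character basis via Theorem~\ref{tensor product}, and restriction to obtain a frame (resp.\ Riesz sequence) for $L^2(\Omega)$; the only cosmetic difference is that the paper obtains $\ede(J_\eps)=m_{\widehat{G}}(\Omega_\eps)$ by applying Landau's Theorem~\ref{LCA Landau} to $J_\eps$ as a complete interpolation set for $PW_{\Omega_\eps}$, rather than computing the density of the $H$-periodic index set directly. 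One small slip to fix: $\Omega_\eps$ must be the \emph{full} preimage $\pi^{-1}(\Omega'_\eps)$, not its intersection with a fundamental domain of $K$, since otherwise it need not contain $\Omega$ and the lifting theorem would not yield a basis of $L^2(\Omega_\eps)$.
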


Although roughly speaking the strategy of the proof will be similar to the one used in~\cite{LS} and \cite{JM} (see also~\cite{K}), in order to pursue this strategy, we will have to overcome several technical issues. This will be done in the following two sections. Finally, in section~\ref{main proof} we will combine the obtained results, and we will provide the proof of Theorem~\ref{near critical}. 

\section{Constructing Riesz basis in the context of LCA groups} \label{multi}

\noi The relation between multi-tiling sets and the existence of Riesz bases in the $\R^d$ setting was first pointed out in \cite{GeL} by Grepstad-Lev. More precisely, they proved that a bounded Riemann integrable Borel set $\Omega\subseteq\R^d$ admits a Riesz basis of exponentials if it multi-tiles $\R^d$ with translation set a lattice $\La$.  Later on, Kolountzakis gave in \cite{Ko} a simpler proof of this result in a slightly more general form (see also 
\cite{KN} for a different approach).  Important special cases had been proved by Lyubarskii-Seip in \cite{LS}, and Marzo in \cite{JM},  (see also~\cite{LR} and~\cite{S}). 

\medskip

 One of the main theorems of this section is the following generalization of  Grepstad-Lev's result to the LCA group setting.

\begin{teo}\label{SNK in LCA}
Let $H$ be a uniform lattice of $G$, $\La$ its dual lattice, and $k\in \N$. Then, there exist $a_1,\dots, a_k \in G,$ depending only on the lattice $\La$, such that for any relatively compact Borel subset $\Omega$ of $\widehat{G}$ satisfying
$$
\Delta_{\Omega}(\omega) := \sum_{\la \in \La} \chi_{\Omega} (\omega-\la) =k, \,\,\,  
\peso{a.e.}  \omega \in \widehat{G},
$$
the set 
$$
\{e_{a_j -h} \, \sub{\chi}{\Omega}:\ h\in H\,,\ j=1,\ldots,k\}
$$
is a Riesz basis for $L^2(\Omega)$.
\end{teo}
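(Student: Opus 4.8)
The plan is to reduce the claim to the matrix criterion furnished by Theorem~\ref{ShiftBFR0}. By that theorem, with $\phi_j = e_{a_j}\chi_\Omega$ (so that $\tau_h\phi_j = e_{a_j-h}\chi_\Omega$ after a routine computation using $\widehat{\tau_h\phi_j}(\gamma) = (h,-\gamma)\widehat{\phi_j}(\gamma)$ and $\widehat{e_{a_j}\chi_\Omega}(\gamma) = \chi_\Omega(\gamma)(a_j,\gamma)$), it suffices to choose $a_1,\dots,a_k\in G$ so that the matrix
$$
T_\w = \bigl( (a_j, \w+\la_i(\w)) \bigr)_{i,j=1}^k
$$
is uniformly invertible: there exist $A,B>0$ with $A\|x\|^2 \le \|T_\w x\|^2 \le B\|x\|^2$ for a.e.\ $\w\in D$ and all $x\in\C^k$. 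Here $\la_1(\w),\dots,\la_k(\w)$ are the (a.e.\ well-defined, by the $k$-tiling hypothesis and Corollary~\ref{dim fiber}) elements of $\La$ with $\w+\la_i(\w)\in\Omega$. The upper bound $B$ is automatic: every entry of $T_\w$ has modulus $1$, so $\|T_\w\| \le k$. The entire content is therefore the uniform lower bound, i.e.\ a uniform lower bound on $|\det T_\w|$ (equivalently on $\|T_\w^{-1}\|^{-1}$), over the finitely-many-up-to-reordering configurations $\{\w+\la_1(\w),\dots,\w+\la_k(\w)\}$ that can arise.

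The key observation is that $T_\w$ depends only on the $k$-element subset $\{\la_1(\w),\dots,\la_k(\w)\}\subset\La$ and on $a_1,\dots,a_k$ through the characters evaluated at points of $\La$ — crucially, the entry $(a_j,\w+\la_i)$ factors as $(a_j,\w)(a_j,\la_i)$, and the scalar factor $(a_j,\w)$ (constant along the $j$-th column) has modulus $1$ and hence does not affect $|\det T_\w|$. Thus
$$
|\det T_\w| = \bigl|\det\bigl( (a_j,\la_i(\w)) \bigr)_{i,j=1}^k\bigr|,
$$
which depends only on the unordered $k$-subset $S(\w) = \{\la_1(\w),\dots,\la_k(\w)\}$ of $\La$. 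So I must choose $a_1,\dots,a_k$ so that for \emph{every} $k$-subset $S = \{\mu_1,\dots,\mu_k\}$ of $\La$ that actually occurs as some $S(\w)$, the matrix $\bigl((a_j,\mu_i)\bigr)$ is invertible. Since $\Omega$ is relatively compact and $\La$ is a uniform lattice, the occurring subsets $S$ all lie within a bounded region; but there may still be infinitely many of them, so a genuine argument is needed rather than a finite intersection of open conditions.

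The mechanism for producing a single $(a_1,\dots,a_k)$ that works simultaneously for all these subsets is where the ``depending only on $\La$'' comes from, and it is the main obstacle. The natural route is: first replace $\La$ by a full-rank finitely generated approximation — more precisely, use that $\widehat{G}$ is compactly generated and $\La$ is a uniform lattice in it, so $\La \cong \Z^N\times F$ for some finite group $F$ with $\widehat\La \cong \T^N\times\widehat F$, and $G/H \cong \widehat\La$. The pairing $(a,\mu)$ for $a\in G$, $\mu\in\La$ descends to the pairing of $\widehat\La$ with $\La$; picking $a_j$ amounts to picking a point $t_j\in\widehat\La$. For $\La = \Z^N$ and $t\in\T^N$, the matrix $\bigl((t_j,\mu_i)\bigr) = \bigl(e^{2\pi i t_j\cdot \mu_i}\bigr)$ is a generalized Vandermonde/Fourier matrix, and one argues that for generic $(t_1,\dots,t_k)\in(\T^N)^k$ every such $k\times k$ submatrix (over all finite index sets $\mu_1,\dots,\mu_k$) is nonsingular — this is a Vandermonde-type nonvanishing statement; the set of bad $(t_1,\dots,t_k)$ for a fixed $S$ is the zero set of a nonzero real-analytic (trigonometric-polynomial) function, hence null, and a countable union over all $S$ is still null, so a good choice exists. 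The remaining subtlety — getting a \emph{uniform} (not merely pointwise nonzero) lower bound on $|\det T_\w|$ — is handled by noting that, after fixing the good $(a_j)$, the finitely-generated structure forces the occurring $S(\w)$ to be translates of finitely many ``local patterns'' (because $\La/(\text{a sublattice fine enough to see }\Omega)$ is finite), so only finitely many distinct values $|\det T_\w|$ occur, each nonzero, yielding $A>0$. I expect the careful bookkeeping of this last reduction — pinning down exactly which $S$ occur and why finitely many determinant-values suffice — together with the construction of the universal generic point $(t_1,\dots,t_k)$, to be the technical heart of the argument.
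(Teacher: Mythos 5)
Your proposal follows essentially the same route as the paper's proof: reduce to the fiber matrices of Theorem~\ref{ShiftBFR0}, strip off the unimodular column factors $(a_j,\w)$ so that only the character matrices $\bigl((a_j,\la_i)\bigr)$ matter, and choose $(a_1,\dots,a_k)$ outside the countable union of null zero sets of the determinant trigonometric polynomials indexed by $k$-tuples of $\La$ --- this is exactly Lemma~\ref{nonzero} together with the set $\cP_k$ in the paper, and it is also what yields universality. The one place you overcomplicate matters is the uniform lower bound: since $\overline{\Omega}$ and $\overline{D}$ are compact and $\La$ is discrete, only finitely many $\la\in\La$ satisfy $(\la+\overline{D})\cap\overline{\Omega}\neq\varnothing$ (Lemma~\ref{ulala}), so only finitely many matrices $E_\w$ occur at all; hence your worry that infinitely many subsets $S(\w)$ might arise is unfounded, and the ``translates of finitely many local patterns'' detour is unnecessary.
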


\medskip

We would like to emphasize that, in the previous theorem, the same set $\{a_1,\ldots,a_k\}$ can be used for any $k$-tiling set $\Omega$. 
We call such a $k$-tuple $(a_1,\ldots,a_k)$  \textbf{$H$-universal}. The following result is a slight improvement of the already known results.

\begin{teo}\label{full measure}
Let $H$ be a uniform lattice of $G$ and $k\in \N$.  Then, there exists a Borel set $N\subseteq G^k$ such that $m_{G^k}(N)=0$  and every $k$-tuple $(a_1, \ldots,a_k)\in G^k\setminus N$ is $H$-universal.  
\end{teo}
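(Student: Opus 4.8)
The plan is to combine Theorem~\ref{ShiftBFR0} with a genericity (measure-zero exceptional set) argument on the matrix-valued function $T_\w$. Fix a uniform lattice $H$, its dual $\La$, and $k\in\N$. By Theorem~\ref{ShiftBFR0}, for a $k$-tiling $\Omega$ and generators $\phi_j=e_{a_j}\chi_\Omega$, the system $\{e_{a_j-h}\chi_\Omega\}$ is a Riesz basis for $L^2(\Omega)\cong PW_\Omega$ precisely when the matrices $T_\w$ have uniformly bounded condition number for a.e.\ $\w\in D$. With this particular choice of generators one computes $\widehat{\phi}_j(\w+\la_i(\w))=(a_j,\w+\la_i(\w))$, so $T_\w$ is (a submatrix of) a character matrix evaluated at the $k$ points $a_1,\dots,a_k$ and the $k$ frequencies $\la_1(\w),\dots,\la_k(\w)$ that land $\w$ inside $\Omega$. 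The key observation is that since $\Omega$ is $k$-tiling, the $k$ values $\la_i(\w)$ are always \emph{distinct} elements of $\La$, and as $\Omega$ and $\w$ vary, the only tuples of distinct lattice elements $(\mu_1,\dots,\mu_k)\in\La^k$ that can occur form a \emph{countable} set. For each such fixed tuple $\vec\mu$ of distinct elements, one has a single fixed $k\times k$ matrix $M_{\vec\mu}(a_1,\dots,a_k)=\big((a_j,\mu_i)\big)_{i,j}$ depending only on $(a_1,\dots,a_k)\in G^k$, and the bad set is $\{(a_1,\dots,a_k): \det M_{\vec\mu}(a_1,\dots,a_k)=0\}$.

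First I would establish the genericity for a single tuple: for distinct $\mu_1,\dots,\mu_k\in\widehat{G}$, the map $(a_1,\dots,a_k)\mapsto \det\big((a_j,\mu_i)\big)$ is a (continuous, in fact ``real-analytic-type'') function on $G^k$ that is not identically zero, so its zero set $N_{\vec\mu}$ has $m_{G^k}$-measure zero. Non-vanishing somewhere is clear: if $a_1,\dots,a_k$ are chosen so that the characters $\mu_1,\dots,\mu_k$ separate them — which is possible because distinct characters are distinct functions and, by Pontryagin duality, $G\simeq\widehat{\widehat G}$ separates points of $\widehat G$ — then the matrix is a Vandermonde-like nonsingular matrix; concretely one can exhibit a ``triangular'' choice making the determinant $\neq 0$. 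For the measure-zero conclusion, I would use that $G\simeq\R^d\times\T^m\times\D$ by our standing hypotheses, reduce to the continuous part, and invoke the standard fact that the zero set of a nontrivial real-analytic (or trigonometric-polynomial) function has Lebesgue measure zero, combined with Fubini over the discrete factor $\D^k$. Then set $N=\bigcup_{\vec\mu} N_{\vec\mu}$, a countable union of null sets, hence $m_{G^k}(N)=0$.

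Next I would show that any $(a_1,\dots,a_k)\notin N$ is $H$-universal. Fix such a tuple and any $k$-tiling $\Omega$. For a.e.\ $\w\in D$, $T_\w=M_{\vec\mu(\w)}(a_1,\dots,a_k)$ for the tuple $\vec\mu(\w)$ of the $k$ distinct elements of $\La$ with $\w+\mu_i(\w)\in\Omega$, so $\det T_\w\neq 0$ for a.e.\ $\w$. To invoke Theorem~\ref{ShiftBFR0}(ii) I need \emph{uniform} bounds $A\|x\|^2\le\|T_\w x\|^2\le B\|x\|^2$. The upper bound is automatic since $|(a_j,\mu_i)|=1$ gives $\|T_\w\|\le k$ uniformly. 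For the lower bound I would use that there are only countably many matrices $\{M_{\vec\mu}(a_1,\dots,a_k):\vec\mu\in\La^k\text{ distinct}\}$ appearing, each invertible, so $\|T_\w^{-1}\|$ takes only countably many finite values — but a countable set of finite numbers need not be bounded, so this step needs care. The fix: only finitely many tuples $\vec\mu$ can \emph{actually occur} for a given $\Omega$, because $\Omega$ is relatively compact and $\La$ is discrete, so $\w+\mu_i(\w)\in\Omega$ with $\w\in D$ forces $\mu_i$ into a fixed \emph{finite} subset $\La\cap(\Omega-D)$ of $\La$; hence for each fixed $\Omega$ the supremum of $\|T_\w^{-1}\|$ is over a finite set and is finite, yielding $A>0$. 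Then Theorem~\ref{ShiftBFR0} gives the Riesz basis property, and since the exceptional set $N$ does not depend on $\Omega$, this proves $H$-universality.

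The main obstacle I anticipate is the uniform lower bound / finiteness of $\|T_\w^{-1}\|$: one must be careful that ``a.e.\ nonsingular'' does not by itself give a uniform Riesz lower bound, and the resolution leans essentially on the \emph{relative compactness} of $\Omega$ (which makes the set of occurring frequency-tuples finite) — this is exactly why the hypothesis that $\Omega$ be relatively compact cannot be dropped, consistent with the counterexample the authors mention in the introduction. A secondary technical point is the ``measure-zero zero-set'' claim for character determinants on a general $G\simeq\R^d\times\T^m\times\D$; this is routine via the real-analyticity of $a\mapsto(a,\mu)$ on the $\R^d\times\T^m$ part plus Fubini over $\D$, but it does require writing $\det M_{\vec\mu}$ as a nontrivial exponential polynomial and checking it is not identically zero, which follows from the separation argument above.
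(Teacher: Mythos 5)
Your proposal is correct and follows essentially the same route as the paper's proof: generators $\hat\phi_j=e_{a_j}\chi_\Omega$, reduction via Theorem~\ref{ShiftBFR0} to the fiber matrices, the observation that the relevant determinants form a countable family of nontrivial trigonometric polynomials on $G^k$ whose zero sets are null (the paper's Lemma~\ref{nonzero} plays the role of your real-analyticity-plus-Fubini step), and the compactness/discreteness argument (the paper's Lemma~\ref{ulala}) to get finitely many occurring matrices and hence the uniform lower Riesz bound. The only cosmetic differences are that the paper writes $T_\w=E_\w U_\w$ with $U_\w$ a unitary diagonal factor rather than identifying $T_\w$ with $M_{\vec\mu(\w)}$ outright (your identity is off by exactly that harmless unitary factor), and that your explicit verification that the determinant is a nonzero exponential polynomial for distinct $\mu_i$ is a point the paper leaves implicit.
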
 

\medskip

\begin{rem}\label{probability rem}
Note that, if we fix a fundamental domain $D$, given any universal $k$-tuple $(a_1,\ldots,a_k)$ there exists a unique $k$-tuple $(d_1,\ldots,d_k)\in D^k$ such that
$$
\{e_{a_j -h} \, \sub{\chi}{\Omega}:\ h\in H\,,\ j=1,\ldots,k\}=
\{e_{d_j -h} \, \sub{\chi}{\Omega}:\ h\in H\,,\ j=1,\ldots,k\}.
$$
So, we can restrict out attention to universal $k$-tuples belonging to $D^k$.  In this case, consider the ``uniform'' probability measure on $D^k$ given by the restriction of the Haar measure of $G^k$ to $D^k$ (conveniently normalized). Then another way to state Theorem~\ref{full measure} is that a $k$-tuple in $D^k$ is almost surely $H$-universal. \EOE
\end{rem} 

The second main result of this section is the following  converse of Theorem~\ref{SNK in LCA}.

\begin{teo}\label{la vuelta}
Let $H$ be a uniform lattice of $G$ and $\La$ its dual lattice. Given a relatively compact subset $\Omega$ of $\widehat{G}$, if $L^2(\Omega)$ admits a Riesz basis of the form
$$
\{e_{a_j -h} \, \sub{\chi}{\Omega}:\ h\in H\,,\ j=1,\ldots,k\}
$$
for some  $a_1,\dots, a_k \in G,$ then $\Omega $ $k$-tiles $\widehat{G}$ with $\La$.
\end{teo}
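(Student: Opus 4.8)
The plan is to pass to $PW_\Omega$ via the Fourier transform and then carry out a fibrewise dimension count, reusing the periodization identities from the proof of Theorem~\ref{ShiftBFR0}. The unitary inverse Fourier transform maps $L^2(\Omega)$ onto $PW_\Omega$ and intertwines multiplication by the character $e_{-h}$ of $\widehat{G}$ with the translation $\tau_h$; hence, setting $\phi_j\in PW_\Omega$ with $\widehat{\phi}_j=e_{a_j}\chi_\Omega$, the hypothesis is equivalent to: the family $\Phi_H=\{\tau_h\phi_j:\ h\in H,\ j=1,\dots,k\}$ is a Riesz basis of $PW_\Omega$. Since $\Omega$ is relatively compact, picking a relatively compact fundamental domain $D$ of $\widehat{G}/\La$ we have $\La(\w):=\{\la\in\La:\ \w+\la\in\Omega\}\subseteq\La\cap(\overline{\Omega}-\overline{D})$, a fixed finite set, so $n(\w):=\#\La(\w)=\Delta_\Omega(\w)$ is bounded on $D$ and, by $\La$-periodicity, everywhere. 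Thus the range-function machinery applies, $J_\Omega(\w)$ is the coordinate subspace $\C^{\La(\w)}\subseteq\ell^2(\La)$ of dimension $n(\w)$ by Corollary~\ref{dim fiber}, and — by the remark following that corollary — the conclusion that $\Omega$ $k$-tiles $\widehat{G}$ with $\La$ is exactly the assertion $n(\w)=k$ for a.e.\ $\w\in D$, which is what we prove.

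For $\{a_{j,h}\}$ with finitely many nonzero entries, the $\La$-periodization used in the proof of Theorem~\ref{ShiftBFR0} gives
\begin{gather*}
\Big\|\sum_{j,h}a_{j,h}\tau_h\phi_j\Big\|^2=\int_D\|S_\w\,m(\w)\|^2\,dm_{\widehat{G}}(\w),\\
\sum_{j,h}|a_{j,h}|^2=\int_D\|m(\w)\|^2\,dm_{\widehat{G}}(\w),
\end{gather*}
where $m_j=\sum_h a_{j,h}e_{-h}$, $m=(m_1,\dots,m_k)$, and $S_\w$ is the $n(\w)\times k$ matrix $\big(\widehat{\phi}_\ell(\w+\la)\big)_{\la\in\La(\w),\ \ell=1,\dots,k}$ (the rectangular analogue of $T_\w$). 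Since $\{e_{-h}\}_{h\in H}$ is an orthonormal basis of $L^2(D)\simeq L^2(\widehat{G}/\La)$, the vectors $m$ fill a dense subspace of $L^2(D,\C^k)$; hence the lower Riesz bound $A$ for $\Phi_H$, localized by the same standard measure-theoretic argument used there, forces $A\|x\|^2\le\|S_\w x\|^2$ for every $x\in\C^k$ and a.e.\ $\w$. In particular $S_\w$ is injective, so $\operatorname{rank}S_\w=k$ and $n(\w)\ge k$ a.e.

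For the reverse inequality I would use completeness of $\Phi_H$. The field $\w\mapsto\ker S_\w^{\ast}\subseteq J_\Omega(\w)$ is measurable (as $\w\mapsto S_\w$ is) with $\dim\ker S_\w^{\ast}=n(\w)-k$ a.e. If $n(\w)>k$ on a set $E\subseteq D$ of positive measure, a measurable selection yields unit vectors $v(\w)\in\ker S_\w^{\ast}$ for $\w\in E$; then $\chi_E v$ is a square-integrable section of $J_\Omega$, so $\chi_E v=\cT f$ for a nonzero $f\in PW_\Omega$. Periodizing the pairing exactly as in the completeness part of the proof of Theorem~\ref{ShiftBFR0}, for every $h\in H$ and $j$,
$$
\pint{f,\tau_h\phi_j}=\int_D\big(S_\w^{\ast}\,\cT f(\w)\big)_j\,e_h(\w)\,dm_{\widehat{G}}(\w)=0,
$$
since $S_\w^{\ast}\cT f(\w)=\chi_E(\w)\,S_\w^{\ast}v(\w)=0$. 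This contradicts completeness, so $n(\w)\le k$ a.e. Combining the two inequalities, $\Delta_\Omega(\w)=n(\w)=k$ a.e., i.e.\ $\Omega$ $k$-tiles $\widehat{G}$ with $\La$.

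The step I expect to be the main obstacle is the measurable selection of the unit vectors $v(\w)\in\ker S_\w^{\ast}$ together with the verification that $\chi_E v$ genuinely belongs to $\cT(PW_\Omega)$ — that is, making rigorous, inside the range-function formalism of \cite{CP}, the intuition that the fibre dimension $n(\w)$ is a measurable quantity bounded below by the Riesz lower bound and above by completeness. The localization step turning the integral Riesz inequality into its a.e.\ pointwise counterpart is routine, but it does rely on $\{e_{-h}\}_{h\in H}$ being an orthonormal basis of $L^2(\widehat{G}/\La)$.
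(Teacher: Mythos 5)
Your proof is correct and follows essentially the same route as the paper's: both reduce the statement to the fibrewise dimension count $\Delta_\Omega(\w)=\dim J_\Omega(\w)=k$ for a.e.\ $\w$, using Corollary \ref{dim fiber} to identify the fibre with the coordinate subspace indexed by $\{\la\in\La:\ \w+\la\in\Omega\}$. The only difference is that the paper invokes the fiberization characterization of Riesz bases of translates from \cite{CP} as a black box, whereas you re-derive its two halves by hand (injectivity of $S_\w$ from the lower Riesz bound, triviality of $\ker S_\w^{*}$ from completeness) by adapting the periodization computation of Theorem \ref{ShiftBFR0}; this makes the argument more self-contained but rests on the same underlying idea, and the measurable-selection step you flag is harmless since $\ker S_\w^{*}$ is constant on each of the finitely many measurable pieces where $\{\la:\ \w+\la\in\Omega\}$ is constant.
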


The proofs of these results are provided in the next subsection. Then, in the last subsection we will show a counterexample that answer negatively a question raised by Kolountzakis in \cite{Ko}.

\subsection{Proofs of Theorems \ref{SNK in LCA}, \ref{full measure} and \ref{la vuelta}}

\noi We begin with two technical lemmas. Following Rudin's book \cite{R}, we will say that a function $p$ is a \textbf{trigonometric polynomial} on $G$ if it has the form
$$
p(g)= \sum_{j=0}^n c_j \gamma_{j} (g)
$$
for some $n\in\N$, $c_j\in\C$ and $\gamma_j\in \widehat{G}$.

\begin{lem} \label{nonzero}
The zero set of a trigonometric polynomial $p$ on $G$ has zero Haar measure.
\end{lem}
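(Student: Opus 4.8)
The plan is to normalize the statement, pass to a group in the structural normal form provided by the standing hypothesis, and there argue by induction on the number of ``continuous'' directions, the base case being a one–variable transcendental estimate.

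\medskip
\noindent\emph{Normalization and reduction.} We may assume $p\not\equiv 0$ and write $p=\sum_{j=1}^{n}c_{j}\gamma_{j}$ with the $\gamma_{j}\in\widehat{G}$ pairwise distinct and $c_{j}\neq 0$; since distinct characters are linearly independent as functions, this is precisely the hypothesis that $p$ is not the zero function, and $Z(p):=p^{-1}(\{0\})$ is closed, hence Borel. As $|\gamma_{n}|\equiv 1$, replacing $p$ by $\overline{\gamma_{n}}\,p$ leaves $Z(p)$ unchanged, so we may take $\gamma_{n}$ trivial; then $\Gamma:=\langle\gamma_{1},\dots,\gamma_{n}\rangle\leq\widehat{G}$ is finitely generated. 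Since $p$ is constant on the cosets of $\Gamma^{\perp}=\{g\in G:\gamma(g)=1\ \forall\gamma\in\Gamma\}$, it factors as $p=\bar p\circ\pi$ through $\pi\colon G\to L:=G/\Gamma^{\perp}$, with $Z(p)=\pi^{-1}(Z(\bar p))$; by Weil's formula $\pi$ sends $m_{G}$–null sets to $m_{L}$–null sets and pulls $m_{L}$–null sets back to $m_{G}$–null sets, so it suffices to prove $m_{L}(Z(\bar p))=0$. Now $\widehat{L}=(\Gamma^{\perp})^{\perp}=\overline{\Gamma}$ is the closure of a finitely generated subgroup of $\widehat{G}$, hence compactly generated, so by the structure theorem $L\simeq\R^{d}\times\T^{m}\times\D$ with $\D$ a countable discrete abelian group, and $\bar p$ is again a trigonometric polynomial on $L$.

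\medskip
\noindent\emph{The connected model.} Write $m_{L}=\ell_{d}\otimes\mu_{m}\otimes\nu$ ($\ell_{d}$ Lebesgue on $\R^{d}$, $\mu_{m}$ normalized Haar on $\T^{m}$, $\nu$ counting on $\D$), so that $m_{L}(Z(\bar p))=\sum_{\delta\in\D}(\ell_{d}\otimes\mu_{m})\big(Z(\bar p(\cdot,\cdot,\delta))\big)$, each slice $\bar p(\cdot,\cdot,\delta)$ being a trigonometric polynomial on the connected group $\R^{d}\times\T^{m}$. Thus the lemma reduces to: \emph{a not--identically--zero trigonometric polynomial on $\R^{d}\times\T^{m}$ has zero set of measure zero}, which I would prove by induction on $d+m$. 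If $m\geq 1$, peel off one circle and write the polynomial as $\sum_{k\in F}A_{k}(y)z^{k}$, $F\subset\Z$ finite, the $A_{k}$ trigonometric polynomials on $\R^{d}\times\T^{m-1}$ not all $\equiv 0$; choosing $k_{0}\in F$ with $A_{k_{0}}\not\equiv 0$, the inductive hypothesis makes $\{y:A_{k_{0}}(y)=0\}$ null, so for a.e.\ $y$ one has a nonzero Laurent polynomial in $z$, with finitely many zeros on $\T$, and Tonelli's theorem closes this case. If $m=0$, $d\geq 1$, peel off one real variable $s$ and write the polynomial as $\sum_{\xi\in F}A_{\xi}(y)e^{2\pi i\xi s}$, $F\subset\R$ finite, $A_{\xi}$ trigonometric polynomials on $\R^{d-1}$ not all $\equiv 0$; again for a.e.\ $y$ this is a nonzero exponential polynomial in $s$, hence extends to a nonzero entire function of $w$ (it has a nonvanishing coefficient, and distinct exponentials $e^{2\pi i\xi w}$ are linearly independent), so it has only isolated real zeros; Tonelli again. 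The case $d=m=0$ is trivial, and all measurability is harmless since $p$ is continuous and the fibrewise sets that occur are closed.

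\medskip
\noindent\emph{The main obstacle.} Apart from the transcendental base case on $\R$, the genuinely delicate point is the passage through the discrete factor $\D$: on a \emph{disconnected} group a nonzero trigonometric polynomial can vanish identically on a whole coset of the identity component, so one must know that the slices $\bar p(\cdot,\cdot,\delta)$ one needs are not themselves identically zero. This is automatic once $L$ is connected, i.e.\ once $\overline{\Gamma}=\widehat{L}$ is torsion--free, for then $\D$ is trivial and one is directly in the model $\R^{d}\times\T^{m}$ treated above; in general this is the place where the ambient hypotheses must be invoked, to guarantee that the characters occurring in $p$ are separated already on the continuous part of the group (equivalently, that $\bar p$ does not vanish on a coset of the identity component of $L$).
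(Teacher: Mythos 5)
Your treatment of the connected model is correct and is in fact more complete than the paper's: the induction on $d+m$, peeling off one circle or one real coordinate and finishing with Tonelli together with the linear independence of distinct exponentials, actually proves the fact that the paper's argument merely invokes at its final step (namely that a non-trivial trigonometric polynomial on $\R^d\times\T^m$ has a null zero set). The overall route is otherwise the same as the paper's: slice the zero set over the discrete factor $\D$ and restrict $p$ to a slice $\R^d\times\T^m\times\{\delta\}$, with your extra (harmless) reduction to $L=G/\Gamma^{\perp}$ in front.

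The genuine gap is exactly the point you flag in your closing paragraph and then leave open: you never show that the relevant slices $\bar p(\cdot,\cdot,\delta)$ are not identically zero, saying only that ``the ambient hypotheses must be invoked''. No such invocation can succeed, because the lemma as stated is false whenever the discrete factor is non-trivial. Take $G=\Z$ (second countable, non-compact, with $\widehat{G}=\T$ compactly generated, so the standing hypotheses hold) and $p(n)=1+e^{\pi i n}=1+(-1)^n$, a combination of two distinct characters with non-zero coefficients: its zero set is $2\Z+1$, of positive (indeed infinite) Haar measure. Your reduction does not rescue this example: $\Gamma=\{0,1/2\}\subseteq\T$, $L=\Z/2\Z$, and $\bar p$ vanishes identically on the non-trivial coset. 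The paper's own proof contains the very same unjustified step --- it asserts without argument that the restriction $q$ of $p$ to the slice through $d_0$ ``is non-trivial'' --- so your proposal is no weaker than the printed proof, but neither establishes the lemma in the stated generality. A correct version must either assume the discrete factor trivial or restrict attention to polynomials whose characters remain pairwise distinct on the identity component; the latter is what would have to be checked for the determinant polynomials to which the lemma is applied in the proof of Theorem \ref{SNK in LCA}.
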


\bdem

By the standing hypothesis, we can identify $G$ with the group $\R^d\times\T^m\times \D$, for some  countable discrete LCA group $\D$. Hence, given $(x,\w,d)\in \R^d\times\T^m\times \D$, the polynomial $p$ can be written as
$$
p(x,\w,d)= \sum_{j=0}^n c_j \rho_j(x)\tau(\w)\delta(d)
$$
where $\rho\in \widehat{\R}^d$, $\tau\in \widehat{\T}^d$, and $\delta\in \widehat{\D}$. Let $C_p= \{(x,\w,d): p(x,\w,d)=0\}$, and suppose by contradiction that $m_G(C_p)>0$. Since
$$
C_p=\bigcup_{d\in D} C_p\cap\big( \R^d\times\T^m\times \{d\} \big),
$$
there exists $d_0\in D$ such that
$$
m_G\big(C_p\cap \big(\R^d\times\T^m\times \{d_0\}\big)\big)>0
$$
If we restrict $p$ to $\R^d\times\T^m\times \{d_0\}$ we get the trigonometric polynomial $q$ on $\R^d\times \T^m$ 
$$
q(x,\w)= \sum_{j=0}^n \big(c_j \delta(d)\big)\, \rho_j(x)\tau(\w)
$$
that is non-trivial and its zero set has positive measure. This is a contradiction, and therefore $m_g(C_p)=0$.
\edem

\begin{lem}\label{ulala}
Let $K_1$ and $K_2$ be compact subsets of $\widehat{G}$. If
$$
\Gamma=\{\la\in\La:\ (\la+K_1)\cap K_2\neq \varnothing\},
$$
then $\#\Gamma<\infty$.
\end{lem}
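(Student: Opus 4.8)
The plan is to exhibit $\Gamma$ as a discrete subset of a fixed compact set, and then invoke the fact that a discrete compact set is finite. First I would note that if $\la\in\Gamma$ then there are $k_1\in K_1$ and $k_2\in K_2$ with $\la+k_1=k_2$, i.e. $\la=k_2-k_1$; hence
$$
\Gamma\subseteq \La\cap(K_2-K_1).
$$
Since the subtraction map $\widehat{G}\times\widehat{G}\to\widehat{G}$ is continuous and $K_2\times K_1$ is compact, the set $K_2-K_1$ is compact. Now $\La$ is a uniform lattice of $\widehat{G}$, in particular a discrete subgroup of the Hausdorff group $\widehat{G}$, and is therefore closed. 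Consequently $\La\cap(K_2-K_1)$ is a closed subset of a compact set, hence compact; on the other hand it is discrete, because $\La$ carries the discrete topology and so does every subset of $\La$. A compact discrete topological space is finite, so $\#\big(\La\cap(K_2-K_1)\big)<\infty$, and a fortiori $\#\Gamma<\infty$.

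If one prefers to avoid the remark that $\La$ is closed, the same conclusion follows by a counting/measure argument that uses only discreteness, local compactness, and positivity of Haar measure on nonempty open sets. By discreteness there is an open neighborhood $V$ of the identity of $\widehat{G}$ with $V\cap\La=\{0\}$; by continuity of subtraction and local compactness one can choose an open neighborhood $W$ of the identity with compact closure $\overline{W}$ and $W-W\subseteq V$. Then the translates $\{\la+W\}_{\la\in\Gamma}$ are pairwise disjoint (an intersection of $\la_1+W$ and $\la_2+W$ would give $\la_1-\la_2\in W-W\subseteq V$, forcing $\la_1=\la_2$), and all of them lie inside $(K_2-K_1)+\overline{W}$, which is compact and hence of finite Haar measure. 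Since $m_{\widehat{G}}(W)>0$, only finitely many disjoint translates of $W$ can fit, so $\Gamma$ is finite.

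This is a routine finiteness lemma for lattices, and I do not anticipate any real obstacle; the only points requiring (minor) care are the compactness of $K_2-K_1$ and the discreteness/closedness of $\La$ inside $\widehat{G}$, both of which are immediate from the standing hypotheses (second countable, hence Hausdorff and locally compact, LCA groups) and the definition of a uniform lattice.
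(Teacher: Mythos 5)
Your proof is correct and follows essentially the same route as the paper: observe that $\Gamma\subseteq\La\cap(K_2-K_1)$, a discrete subset of a compact set, hence finite. The extra justifications you supply (closedness of $\La$, the Haar-measure counting alternative) are fine but not needed beyond what the paper already records.
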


\bdem
Note that $\Gamma \subset \La \cap (K_1- K_2)$, where $K_1-K_2=\{k_1-k_2:  k_j\in K_j, j=1,2\}$. Since $\La$ is a discrete set and $(K_1 - K_2)$ is compact, then $\Gamma$ should be necessarily a finite set. 
\edem

\medskip

\begin{proof}[Proof of Theorems \ref{SNK in LCA} and \ref{full measure}]
Given $a_1,\ldots,a_k\in G$, define the functions $\phi_1,\ldots,\phi_k$ by their Fourier transform in the following way:

 \begin{equation} \label{fies}
 \hat{\phi}_j:= e_{a_j} \, \sub{\chi}{\Omega}, \quad  j\in \{1, \ldots, k\}.
 \end{equation}
 We will show that under the hypothesis on $\Omega$, there exist  $a_1, \dots, a_k$ such that $\phi_1,\ldots,\phi_k$  translated by $H$ form a Riesz basis for $PW_\Omega$. 

\medskip

Choose a fundamental domain $D$ of $\widehat{G}/\La$.  Since $\Omega$ is a set that $k$-tiles $\widehat{G}$, for almost every $\w\in D$, the vectors $\widehat{\phi}_j(\w)$ have at most $k$ entries different from zero. These entries are those that correspond to the (different) elements $\la_{j}=\la_j(\w)\in\La$, $1\leq j \leq k$, such that $\w+\la_{j}\in\Omega$. For $\w\in D$  consider  the matrix
$$
T_\w=\begin{pmatrix}
\widehat{\phi}_1(\w+\la_{1})&\ldots& \widehat{\phi}_k(\w+\la_{1})\\
\vdots&\ddots&\vdots\\
\widehat{\phi}_1(\w+\la_{k})&\ldots& \widehat{\phi}_k(\w+\la_{k})
\end{pmatrix}.
$$
By Theorem \ref{ShiftBFR0}, the $H$-translations of  $\phi_1,\ldots,\phi_k$  form a Riesz basis for $PW_\Omega$ if and only if there exist $A, B>0$ such that
\begin{equation}\label{all bounds}
     A||x||^2 \leq \|T_\w x\|^2 \leq B||x||^2,
\end{equation}
for every $x\in \C^k$ and almost every $\w\in D$. The rest of the proof follows ideas of~\cite{Ko} suitably adapted to our setting. Firstly, note that
\begin{align}
T_\w&=\begin{pmatrix}
\widehat{\phi}_1(\w+\la_{1})&\ldots& \widehat{\phi}_k(\w+\la_{1})\\
\vdots&\ddots&\vdots\\
\widehat{\phi}_1(\w+\la_{k})&\ldots& \widehat{\phi}_k(\w+\la_{k})
\end{pmatrix}=\begin{pmatrix}
 e_{a_1}\, (\w+\la_1)     &\ldots&       e_{a_k}\, (\w+\la_1)  \\
\vdots&\ddots&\vdots\\
 e_{a_1}\, (\w+\la_k)       &\ldots&    e_{a_k}(\w+\la_k) 
\end{pmatrix}\nonumber \\&=
\begin{pmatrix}
 (a_1,\la_1) &\ldots& (a_k,\la_1)   \\
\vdots&\ddots&\vdots\\
(a_1,\la_{k})&\ldots& (a_k,\la_{k})
\end{pmatrix}\begin{pmatrix}
(a_1,\w)   &0&\ldots&0& 0\\
0&(a_2,\w) &\ldots& 0&0\\
\vdots&\vdots&\ddots&\vdots&\vdots\\
0&0&\ldots& (a_{k-1}, \w)&0\\
0&0&\ldots& 0&(a_k,\w)
\end{pmatrix}  = E_\w\, U_\w \,. \label{short}
\end{align}

Since $U_\w$ is unitary, to prove the inequalities in \eqref{all bounds} is equivalent to show that
\begin{equation}\label{all bounds modified}
     A||x||^2 \leq \|E_{\omega} x\|^2 \leq B||x||^2,
\end{equation}
for every $x\in\C^k$, and almost every $\w\in D$. By Lemma \ref{ulala}, applied to $K_1=\overline{D}$ and $K_2=\overline{\Omega}$, when $\w$ runs over (a full measure subset of) the fundamental domain $D$, only a finite number of different matrices $E_\w$ appear in \eqref{short},  say $E_1,\ldots, E_N$. Thus, it is enough to prove that they are all invertible. Note that the determinants of the $E_\w$ are polynomials of the form
$$
d(x_1,\ldots,x_k)=\sum_{\pi\in S_k}\sgn(\pi) \prod_{j=1}^k(x_{\pi(j)},\la_j(\w)) ,
$$
evaluated in $(a_1,\ldots,a_k) \in G\times \dots \times G= G^k$, where $S_k$ denotes the permutation group on $1,\dots, k$.  
Since $\La$ is countable, the set of trigonometric polynomials on $G^k$
$$
\cP_k=\left\{p(x_1,\ldots,x_k)=\sum_{\pi\in S_k}\sgn(\pi) \prod_{j=1}^k(x_{\pi(j)},\la_j): \mbox{for any $(\la_1,\ldots,\la_k)\in \La^k$} \,\right\}
$$
is countable. This set contains the trigonometric polynomials $d(x_1,\ldots,x_k)$ associated to the determinants of the matrices $E_j$. Note that it also contains the polynomials associated to matrices $E_j'$ coming from any other $k$-tiling set. Therefore, the universal $k$-tuple $(a_1,\ldots,a_k)$ that we are looking for, is any $k$-tuple such that
$$
p(a_1,\ldots,a_k)\neq 0 \quad \forall p\in\cP_k.
$$
To prove that such a $k$-tuple exists, we will use a measure theoretical argument based on Lemma \ref{nonzero}. Note that $G^k$ is a compactly generated LCA group, and $\La^k$ is the dual lattice of the the uniform lattice $H^k$ in $G^k$. Hence, using Lemma \ref{nonzero} with $G^k$ instead of $G$, we get that the union of the zero sets corresponding to these polynomials has zero Haar measure in $G^k$. Therefore, there exist $a_1,\dots, a_N \in G$ so that $(a_1,\dots, a_N)$ does not belong to any of these zero sets. In particular, for these values of $a_j$, the matrices $E_1,\ldots,E_N$ are invertible. Then, by Theorem \ref{ShiftBFR0}, the $H$-translations of the functions $\phi_1,\ldots,\phi_k$  form a Riesz basis for $PW_\Omega$. This is equivalent to say that
 $$
\{e_{a_j -h} \, \sub{\chi}{\Omega}:\ h\in H\,,\ j=1,\ldots,k\},
 $$
 is a Riesz basis on $L^2(\Omega)$. The same holds for any other $k$-tiling set $\Omega'$ by construction of $\cP_k$ and the $k$-tuple $(a_1,\ldots,a_k)$.
\edem

\begin{proof}[Proof of Theorem \ref{la vuelta}]
Recall that for each $\w\in D$
$$
J_\Omega(\w)=\big\{\{g(\w+\la)\}_{\la\in\La}:\ g\in C_b(\Omega) \big\}. 
$$
The hypothesis implies that there exists $E\subset D$ of measure zero such that, given $w\in D\setminus E$,  the set of vectors
\begin{equation}\label{eq riesz basis}
\{\{e_{a_j}(\w+\la)\chi_\Omega(\w+\la)\}_{\la\in\La}:\ j=1,\ldots,k\}
\end{equation}
is a Riesz basis of $J_\Omega(\w)$. In particular, $\dim J_\Omega(\w)=k$ for all $\w\in D\setminus E$. By Corollary \ref{dim fiber}, if 
$\la_1,\ldots,\la_m$ be the elements of $\La$ such that $\w+\la_j\in \Omega$ then
$$
J_\W(\w)=\mbox{span}\{\delta_{\la_j}: \ j=1,\ldots,m\}.
$$ 
where $\{\delta_\la\}_{\la\in\La}$ denotes the canonical basis of $\ell^2(\La)$. This implies that
$$
\#\{\la\in\La:\ \w+\la\in\Omega\}=\dim J_\W(\w),
$$
and we already know that this dimension is equal to $k$ for every $\w\in D\setminus E$. Thus, $\W$ is $k$-tiling with respect to the lattice $\La$.
\end{proof}

\subsection{A counterexample for unbounded sets in $\R$} \label{counterex}

\noi The same scheme can not be applied if $\Omega$ is not relatively compact, as the following example shows. This example also gives a negative answer to the open problem left by Kolountzakis in \cite{Ko}.

\begin{exa}\label{ejemplito}
Let $G=\R$, and consider the following subset of $\widehat{\R}\simeq\R$:
$$
\Omega_0=[0,1)\cup \bigcup_{n=2}^\infty[n-2^{-(n-2)},n-2^{-(n-1)}).
$$

\noi This is a $2$-tiling set with respect to the lattice $\Z$ (see Figure \ref{contraejemplo}).

\bigskip

\begin{figure}[H]
           \centering
           \includegraphics[width=\textwidth]{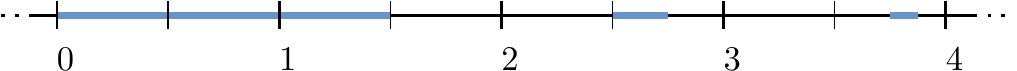}
           \par\vspace{0cm}
           \caption{The set $\Omega_0$.}
           \label{contraejemplo}
\end{figure}

 However, if we consider the functions $\phi_1$ and $\phi_2$ defined through their Fourier transform by 
$$
\hat{\phi}_j(\w)=e^{2\pi i a_j \w} \, \sub{\chi}{\Omega_0},
$$
for $j=1,2$, then integer translations of $\phi_1$ and $\phi_2$ are not a Riesz basis for $PW_\Omega$ for any choice of $a_1,a_2\in\R$. In other words, $(a_1+\Z) \cup (a_2+\Z)$ is not a complete interpolation set for $PW_\Omega$ for any pair $a_1,a_2\in\R$. 

\medskip

To show this, recall that in the proof of  Theorem \ref{SNK in LCA} we proved that the integer translations of $\phi_1$ and $\phi_2$ form a Riesz basis for $PW_\Omega$ if and only if the matrices
\begin{align*}
E_\w&=\begin{pmatrix}
e^{2\pi i a_1 \la_1(\w) }&e^{2\pi i a_2 \la_1(\w) }\\
e^{2\pi i a_1 \la_2(\w) }&e^{2\pi i a_2 \la_2(\w) }
\end{pmatrix},
\end{align*}
and their inverses are uniformly bounded for almost every $\w$ in the fundamental domain, which we for simplicity  choose to be the interval $[0,1)$. For this particular $\Omega_0$, $\la_1(\w)$ is always equal to zero, while  $\la_2(\w)=n$ if $\w\in[1-2^{-(n-1)},1-2^{-n})$, for $n\in\N$. Therefore 
\begin{align*}
E_\w=\begin{pmatrix}
1&1\\
e^{2\pi i a_1 \la_2(\w) }&e^{2\pi i a_2 \la_2(\w) }
\end{pmatrix},
\end{align*}
which can be rewritten as
\begin{align*}
E_\w=\begin{pmatrix}
1&0\\
0&e^{2\pi i a_1 \la_2(\w) }
\end{pmatrix}
\begin{pmatrix}
1&1\\
1&e^{2\pi i (a_2-a_1) \la_2(\w) }
\end{pmatrix}.
\end{align*}
So, if $a_2-a_1\in \Q$, there exists a set of positive measure such that the matrices $T_\w$ are not invertible for $\w$ in that set. On the other hand, if $a_2-a_1\notin\Q$, as the set $\{e^{2\pi i (a_2-a_1) n }\}_{n\in\N}$ is dense in $\T$,  the matrices $E_\w^{-1}$ are not uniformly bounded.

\medskip

Since the set $\Omega_0$ multi-tiles $\R$ with other lattices too, a natural question is whether or not we can obtain Riesz basis using these lattices. The answer is no, and the idea of the proof is essentially the same. For this reason, we only make some comments on the main differences, and we leave the details to the reader. First of all, recall that the (uniform) lattices of $\R$ have the form $\La_\alpha=\alpha \Z$, for some $\alpha\in\R$. It is not difficult to prove that $\Omega_0$ multi-tiles $\R$ only for lattices corresponding to $\alpha=k^{-1}$, for some $k\in\N$.  Moreover, with respect to the lattice $\La_{k^{-1}}$, the set $\Omega_0$ is $2k$-tiling. Furthermore, in the proof of Theorem \ref{SNK in LCA} we showed that  given $a_1,\ldots, a_{2k}\in\R$ then we have: $(a_1+\Z)\cup \ldots\cup (a_{2k}+\Z)$ is a complete interpolation set for $PW_\Omega$ if and only if the matrices
$$
E_\w=\begin{pmatrix}
1&1&\ldots&1&1\\
e^{2\pi i a_1\la_1(\w) }&e^{2\pi i a_2\la_1(\w) }&\ldots & e^{2\pi i a_{2k-1}\la_1(\w) }&e^{2\pi i a_{2k}\la_1(\w)}\\
\vdots                    & \vdots                   &\ddots & \vdots                         &\vdots\\
e^{2\pi i a_1\la_{2k-1}(\w)}&e^{2\pi i a_2\la_{2k-1}(\w) }&\ldots&e^{2\pi i a_{2k-1}\la_{2k-1}(\w)}&e^{2\pi i a_{2k}\la_{2k-1}(\w)}\\
e^{2\pi i a_1\la_{2k}(\w)}&e^{2\pi i a_2\la_{2k}(\w) }&\ldots&e^{2\pi i a_{2k-1}\la_{2k}(\w)}&e^{2\pi i a_{2k}\la_{2k}(\w)}
\end{pmatrix}
$$
and their inverses are uniformly bounded for almost every $\w\in[0,k^{-1})$.  By construction of $\Omega_0$, for each $m\in\N$ we can find an interval $I\subseteq [0,k^{-1})$ of positive measure such that for every $\w\in I$ there exists $j\in\{1,\ldots,2k\}$ so that $\la_j(w)=m$. On the other hand, as in the case studied before, either the orbit 
$$
\{(e^{2\pi i a_1\, m },e^{2\pi i a_2\,m},\ldots , e^{2\pi i a_{2k-1}\,m},e^{2\pi i a_{2k}\,m})\}_{m\in\Z},
$$
is periodic or there exists elements of the orbit as close as we want to the first row vector $(1,\ldots,1)$. Therefore, the matrices $E_\w^{-1}$ can not be uniformly bounded in a full measure subset of $[0,1)$.
\end{exa}

\section{Quasi-dyadic cubes} \label{Dyadics}

\noi In the previous section we proved that there exist many Borel sets $\Omega$ in $\widehat{G}$ such that $L^2(\Omega)$ admits a Riesz basis of characters. However, in general, there are not enough sets of this kind so that they can be used to approximate an arbitrary compact set. Let us briefly explain the reason. In the classical case of $\R^d$, the approximation is done by means of sets that are union of dyadic cubes. Note that the dyadic cubes of side length equal to $2^{-n}$ are fundamental domains for the lattice $2^{-n}\Z$. Hence, the dilation of the cubes is reflected in the refinement of the lattices.  So, in order to get a good approximation in more general LCA groups, the idea is to look for a nested family of lattices whose corresponding fundamental domains become in some sense smaller and smaller. This is the main issue, because to get such family in the group $\G\simeq\R^d\times \Z^m\times \K$ may be difficult (or even impossible), because of the compact factor. The key to overcome this difficulty is given by the following classical result (see \cite{HR1}).

\begin{lem}\label{grupo chico}
Given a neighborhood $U$ of $e$ in $\widehat{G}$, there exists a compact subgroup $K$ included in $U$ and such that $\widehat{G}/K$ is elemental, that is
\begin{equation}\label{ident}
\widehat{G}/K\simeq \R^d\times \Z^m\times \T^\ell\times F,
\end{equation}
where $F$ is a finite group. 
\end{lem}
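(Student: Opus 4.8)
The plan is to reduce the statement to the compact factor of $\widehat{G}$ and then read it off from Pontryagin duality together with the classification of finitely generated abelian groups. By the standing hypothesis (the structure theorem), $\widehat{G}\simeq\R^d\times\Z^m\times\K$ with $\K$ a compact metrizable abelian group, so $\widehat{\K}$ is discrete and its characters separate points. Since a basic neighborhood of the identity in the product $\R^d\times\Z^m\times\K$ has the form $V_1\times\{0\}\times V_3$, and the subgroup I will produce sits entirely in the $\K$-coordinate, it is enough to show: given a neighborhood $V$ of $e$ in $\K$ there is a compact subgroup $K'\subseteq V$ with $\K/K'\simeq\T^\ell\times F$, $F$ finite. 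Indeed, then $K:=\{0\}\times\{0\}\times K'$ is contained in any prescribed $U$ (after shrinking $U$ to a basic neighborhood), and
$$
\widehat{G}/K\;\simeq\;\R^d\times\Z^m\times(\K/K')\;\simeq\;\R^d\times\Z^m\times\T^\ell\times F .
$$

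To build $K'$, I would first use that the characters of $\K$ separate points to pick a finite set $F_0\subseteq\widehat{\K}$ with $F_0^{\perp}:=\{x\in\K:\gamma(x)=1\ \forall\gamma\in F_0\}\subseteq V$; here one uses that, $\K$ being compact, the annihilators of finite subsets of $\widehat{\K}$ form a neighborhood basis of $e$. Let $A:=\langle F_0\rangle\subseteq\widehat{\K}$ be the subgroup generated by $F_0$; it is finitely generated, and since characters are homomorphisms one has $A^{\perp}=F_0^{\perp}\subseteq V$. Put $K':=A^{\perp}$, a closed — hence compact — subgroup of $\K$ contained in $V$.

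It then remains to identify $\K/K'$. By the duality between closed subgroups of $\K$ and quotients of $\widehat{\K}$ (equivalently $\widehat{\K/K'}\cong(K')^{\perp}$), and using that $\widehat{\K}$ is discrete so that $(A^{\perp})^{\perp}=A$, we get $\widehat{\K/K'}\cong A$ as discrete groups. Being finitely generated, $A\cong\Z^{\ell}\times F'$ with $F'$ finite, and dualizing gives $\K/K'\cong\widehat{\Z^{\ell}}\times\widehat{F'}\cong\T^{\ell}\times F$ with $F:=\widehat{F'}$ finite. Substituting into the displayed isomorphism above completes the argument.

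There is no deep obstacle here — this is standard structure theory — and the only points needing a little care are the two bookkeeping steps: that a small neighborhood in $\widehat{G}$ constrains only the $\K$-coordinate of a compact subgroup (so the problem genuinely localizes to $\K$), and that the annihilator correspondence converts ``$K'$ small in $\K$'' into ``$\widehat{\K/K'}$ finitely generated'', i.e.\ that finitely many characters already cut out an arbitrarily small subgroup of a compact abelian group. Alternatively, the same proof can be phrased purely structurally: every compact abelian group is a projective limit of compact abelian Lie groups, a compact abelian Lie group is exactly $\T^{\ell}\times F$ with $F$ finite, and one takes $K'$ to be the kernel of a sufficiently fine Lie quotient of $\K$.
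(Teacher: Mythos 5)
Your proof is correct; the paper gives no argument for this lemma (it is quoted as a classical fact with a citation to Hewitt--Ross), and your reduction to the compact factor followed by the annihilator/duality argument is exactly the standard proof behind that citation. One phrasing nitpick: the annihilators $F_0^{\perp}$ of finite sets $F_0\subseteq\widehat{\K}$ are not themselves neighborhoods of $e$ (they are typically not open); the correct and sufficient statement, which is the one you actually use, is that every neighborhood of $e$ in the compact group $\K$ contains such an annihilator, since it contains a basic set $\{x:\ |\gamma(x)-1|<\eps,\ \gamma\in F_0\}\supseteq F_0^{\perp}$.
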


In $\R^d\times \Z^m\times \T^\ell\times F$, the above mentioned strategy to obtain dilations by means of refinements of lattices can be done without any problem. More precisely, we consider 
\begin{equation}\label{losene}
\begin{array}{l}
\La_n\ \ =\, \La_n(d,m,\ell)\, =\, (2^{-n}\Z)^d\times\Z^m\times\Z_{2^n}^\ell \times F\, \subseteq \widehat{G}/K\,,\\
\ddd_{0}^{(n)}=[-2^{-n-1},2^{-n-1})^d\times \{0\}\times[-2^{-n-1},2^{-n-1})^\ell\times \{e\}\,.\\
\end{array}
\end{equation}

This leads to the following definition of quasi-dyadic cubes.

\begin{fed}\label{qdc}
Let $K$ be a compact subgroup of $\widehat{G}$ such that $\widehat{G}/K$ is elemental, and let $\pi$ the canonical projection from $\widehat{G}$ onto the quotient. Identifying the quotient $\widehat{G}/K$ with the group $\R^d\times \Z^m\times \T^\ell\times F$,  the family of \textbf{quasi-dyadic cubes of generation $n$ associated to $K$}, denoted by $\dcg{K}{n}$, are defined by
$$
Q_{\la}^{(n)}=\pi^{-1}(\cQ_{\la}^{(n)})
$$ 
where $\cQ_\la^{(n)}=\la+\cQ_0^{(n)}$ for $\la\in\La_n$.
\end{fed}

Note that in order to distinguish the cubes in the quotient from the cubes in $\widehat{G}$, for those in $\widehat{G}/K$ we use calligraphic letters. Note also that the quasi-dyadic cubes $Q_{\la}^{(n)}$ are relatively compact. Indeed, if $S_\la^{(n)}$ is a relatively compact Borel section of $\cQ_\la^{(n)}$ in the group $\widehat{G}$, then $Q_{\la}^{(n)}=S_{\la}^{(n)}+K$.

\medskip

The main difference with the classical case, is that the quasi-dyadic cubes are para\-metrized not only by a (dyadic) lattice, but also by some compact subgroups. This family of quasi-dyadic cubes clearly satisfies many of the arithmetical and combinatorial properties of the classical dyadic cubes. However, for our purposes, the following approximation result is the most important.

\begin{pro}\label{tecnico1}
Let  $C$ be a compact set and $V$ an open set such that $C\subset V\subset \widehat{G}$. There exists a compact subgroup $K$ of $\widehat{G}$ such that $\widehat{G}/K$ is an elemental LCA group, and $Q_{\la_1}^{(m)},\ldots, Q_{\la_k}^{(m)}\in \dcg{K}{m}$ for $m\in\N$ large enough  such that
$$
C\subseteq \bigcup_{j=1}^k Q_{\la_j}^{(m)}=\pi^{-1}\left(\bigcup_{j=1}^k  \la_j+\cQ_{0}^{(m)}\right)\subseteq V.
$$
\end{pro}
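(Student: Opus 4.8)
The plan is to combine the compact-subgroup approximation lemma (Lemma \ref{grupo chico}) with a standard compactness argument that covers $C$ by finitely many quasi-dyadic cubes of a sufficiently fine generation. First I would exploit the compactness of $C$ and openness of $V$ to produce a symmetric neighborhood $U$ of the identity $e$ in $\widehat{G}$ such that $C + U + U \subseteq V$; this uses only local compactness of $\widehat{G}$ and the tube-type separation of a compact set from the complement of an open set containing it. Then I would apply Lemma \ref{grupo chico} to this $U$ to obtain a compact subgroup $K \subseteq U$ with $\widehat{G}/K \simeq \R^d \times \Z^m \times \T^\ell \times F$ elemental, and let $\pi : \widehat{G} \to \widehat{G}/K$ be the canonical projection. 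Since $K \subseteq U$, we have $\pi^{-1}(\pi(A)) = A + K \subseteq A + U$ for any set $A$.

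Next I would work in the elemental quotient. The image $\pi(C)$ is compact in $\widehat{G}/K$, and the quasi-dyadic cubes $\cQ_\la^{(n)} = \la + \cQ_0^{(n)}$ with $\la \in \La_n$ tile $\widehat{G}/K$ for every $n$ (by construction in \eqref{losene}, since $\cQ_0^{(n)}$ is a fundamental domain for $\La_n$). For $n$ large enough, each cube $\cQ_0^{(n)}$ is contained in the image of $U$ under $\pi$ — more precisely, the diameter of $\cQ_0^{(n)}$ in the Euclidean/torus factors shrinks to zero as $n \to \infty$, so $\cQ_0^{(n)} \subseteq \pi(U)$ eventually; here one uses that $\pi(U)$ is a neighborhood of the identity in $\widehat{G}/K$ because $\pi$ is an open map. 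Choose $m$ with this property. Now let $\la_1, \dots, \la_k$ be precisely those $\la \in \La_m$ for which $\cQ_\la^{(m)} \cap \pi(C) \neq \varnothing$; this is a finite set because $\pi(C)$ is compact and the cubes form a locally finite (indeed tiling) family. By construction $\pi(C) \subseteq \bigcup_{j=1}^k \cQ_{\la_j}^{(m)}$, hence $C \subseteq \pi^{-1}\big(\bigcup_j \cQ_{\la_j}^{(m)}\big) = \bigcup_j Q_{\la_j}^{(m)}$, which gives the left inclusion.

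For the right inclusion, take any point $x$ in some $Q_{\la_j}^{(m)} = \pi^{-1}(\cQ_{\la_j}^{(m)})$. Since $\cQ_{\la_j}^{(m)}$ meets $\pi(C)$, there is $c \in C$ with $\pi(c) \in \cQ_{\la_j}^{(m)}$, so $\pi(x) - \pi(c)$ lies in $\cQ_{\la_j}^{(m)} - \cQ_{\la_j}^{(m)} = \cQ_0^{(m)} - \cQ_0^{(m)} \subseteq \pi(U) - \pi(U) \subseteq \pi(U + U)$ (after possibly shrinking $U$ at the outset so that this difference set is still inside $\pi$ of the original neighborhood; alternatively enlarge the separation buffer to $U + U + U + U$). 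Thus $\pi(x) \in \pi(c) + \pi(U+U) = \pi(c + U + U)$, so $x \in c + U + U + K \subseteq c + U + U + U \subseteq C + U + U + U \subseteq V$ once the buffer is chosen with enough room. This establishes $\bigcup_j Q_{\la_j}^{(m)} \subseteq V$ and completes the proof.

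The main obstacle I anticipate is purely bookkeeping: carefully tracking the nested neighborhoods of $e$ so that the cube difference sets, the subgroup $K$, and the separation buffer between $C$ and $\widehat{G} \setminus V$ all fit together. The conceptual content is light — it is just the familiar "fine enough partition refines any open cover of a compact set" argument — but one must be attentive that $\pi$ being open is what transports the metric-type control on $\cQ_0^{(n)}$ in the elemental quotient back up to $\widehat{G}$, and that the $K$-fibers (which have no smallness in the compact direction) are absorbed harmlessly because $K \subseteq U$ from the start.
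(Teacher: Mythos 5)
Your proposal is correct and follows essentially the same route as the paper: separate $C$ from the complement of $V$ by a neighborhood $U$ of the identity, invoke Lemma \ref{grupo chico} to get a compact subgroup $K\subseteq U$ with elemental quotient, and then cover $\pi(C)$ by finitely many quasi-dyadic cubes of a generation fine enough that $\cQ_0^{(m)}\subseteq\pi(U)$, so that the preimages stay inside the $U$-buffer around $C$. Your direct selection of all generation-$m$ cubes meeting $\pi(C)$ is a slightly cleaner bookkeeping than the paper's two-stage covering (first by translates $\gamma_i+Q_0^{(2n)}$, then by generation-$4n$ cubes), but the content is the same, and you correctly flag the only delicate point, namely sizing the buffer so that $\cQ_0^{(m)}-\cQ_0^{(m)}$ and the $K$-fibers are both absorbed.
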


\bdem
Let $U$ be a compact neighborhood of $e$ in $\widehat{G}$ such that  $C+U\subseteq V$. Take a compact subgroup $K$ contained in $U$ which satisfies
$$
\widehat{G}/K\simeq \R^d\times \Z^m\times \T^\ell\times F,
$$
for some integers $d,m,\ell\geq 0$. Let $\pi:\widehat{G}\to\widehat{G}/K$ denote the canonical projection. By our assumptions on the open set $U$ we have for $n$ large enough that
$$
C\subseteq C+{Q}_0^{(2n)}\subseteq C+{Q}_0^{(n)}\subseteq V.
$$ 
On the other hand, by the compactness of $C$, there exist $\gamma_1,\ldots,\gamma_j\in C$ such that
$$
C\subseteq \bigcup_{i=1}^j \big(\gamma_i+Q_0^{(2n)}\big).
$$
Consider the lattice $\La_{4n}$, and $\la_{i,1},\ldots,\la_{i,s_i}\in\La_{4n}$ such that
\begin{align*}
\pi(\gamma_i+Q_0^{(2n)})&=\pi(\gamma_i)+\ddd_0^{(2n)}\subseteq \bigcup_{h=1}^{s_i} \big(\la_{i,h}+\ddd_0^{(4n)}\big)\subseteq \pi(\gamma_i) +\ddd_0^{(n)}.
\end{align*}
Let $\{\la_1,\ldots,\la_k\}$ an enumeration of the elements of $\La_{4n}$ used to cover all the sets $\pi(\gamma_i)+\ddd_0^{(2n)}$. Then, the above inclusions imply that
\begin{align*}
C&\subseteq \bigcup_{i=1}^{j} \big(\gamma_i +{Q}_0^{(2n)}\big)\subseteq \bigcup_{j=1}^k \pi^{-1}\big(\la_j+\ddd_0^{(4n)}\big)\subseteq \bigcup_{i=1}^{j} \big(\gamma_{i}+{Q}_0^{(n)}\big)\subset V
\end{align*}
Thus, we can take $m=4n$, and the proof is complete.
\edem

\medskip

 Another good property of the quasi-dyadic cubes is the following.

\begin{pro}\label{Riesz basis in union of qdc}
Let $\Omega$ be finite a union of quasi-dyadic cubes in $\dcg{K}{n}$. Then, the space $L^2(\Omega)$ admits a Riesz basis of characters (restricted to $\Omega$).
\end{pro}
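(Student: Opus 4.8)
The plan is to push the problem down to the quotient group $\widehat{G}/K$, where the multi-tiling theorem applies, and then lift the Riesz basis obtained there back up to $L^2(\Omega)$, exploiting that $\Omega$ is $K$-periodic.

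Write $\Omega=\bigcup_{j=1}^{k}Q_{\la_j}^{(n)}$ with the $\la_j\in\La_n$ distinct, and set $\cE:=\bigcup_{j=1}^{k}\cQ_{\la_j}^{(n)}=\bigcup_{j=1}^{k}\big(\la_j+\cQ_0^{(n)}\big)\subseteq\widehat{G}/K$, so that $\Omega=\pi^{-1}(\cE)$. By the explicit description \eqref{losene}, $\cQ_0^{(n)}$ is a fundamental domain of $\La_n$ in the elemental group $\widehat{G}/K$; hence $\cE$ is a disjoint union of $k$ translates of a fundamental domain and therefore $k$-tiles $\widehat{G}/K$ with respect to $\La_n$. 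Now $K^{\perp}=(\widehat{G}/K)^{\wedge}$ is a second countable LCA group with compactly generated dual $\widehat{G}/K$, and $\La_n$ is precisely the dual lattice of the uniform lattice $H:=\La_n^{\perp}\subseteq K^{\perp}$. Applying Theorem~\ref{SNK in LCA} to $K^{\perp}$, $H$, $\La_n$ and $k$ yields $b_1,\ldots,b_k\in K^{\perp}$ so that
$$
\mathcal{B}:=\{\,e_{b_j-h}\,\sub{\chi}{\cE}\ :\ h\in H,\ j=1,\ldots,k\,\}
$$
is a Riesz basis of $L^2(\cE)$, say with bounds $A,B>0$.

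Next I would decompose $L^2(\Omega)$ under the action of the compact group $K$. Normalize Haar measures so that $m_K(K)=1$ and Weil's formula holds. For $\chi\in\widehat{K}$, let $L^2(\Omega)_\chi$ be the closed subspace of those $F\in L^2(\Omega)$ with $F(\xi+\kappa)=\chi(\kappa)F(\xi)$ for a.e.\ $\xi\in\widehat{G}$, $\kappa\in K$; expanding in Fourier series on $K$ (Proposition~\ref{orthba}) gives the orthogonal decomposition $L^2(\Omega)=\bigoplus_{\chi\in\widehat{K}}L^2(\Omega)_\chi$, with $\widehat{K}$ countable since $G$ is second countable. Pulling back along $\pi$ is an isometry of $L^2(\cE)$ onto $L^2(\Omega)_{\mathbf 1}$ (the subspace of $K$-invariant functions), and by compatibility of the dual pairing of $K^{\perp}$ with $\widehat{G}$ and with $\widehat{G}/K$, it sends $e_b\,\sub{\chi}{\cE}$ to $e_b\,\sub{\chi}{\Omega}$ for every $b\in K^{\perp}\subseteq G$; hence $\mathcal{B}$ lifts to a Riesz basis $\{e_{b_j-h}\,\sub{\chi}{\Omega}\}$ of $L^2(\Omega)_{\mathbf 1}$ with the same bounds, consisting of characters of $\widehat{G}$ (elements of $\widehat{\widehat{G}}=G$) restricted to $\Omega$. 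For a general $\chi\in\widehat{K}$ pick $g_\chi\in G$ with $(g_\chi,\cdot)|_K=\chi$; multiplication by the unimodular character $e_{g_\chi}$ is a unitary of $L^2(\Omega)_{\mathbf 1}$ onto $L^2(\Omega)_\chi$ carrying $e_{b_j-h}\,\sub{\chi}{\Omega}$ to $e_{g_\chi+b_j-h}\,\sub{\chi}{\Omega}$, still a character of $\widehat{G}$ restricted to $\Omega$. Since the bounds $A,B$ do not depend on $\chi$ and the components $L^2(\Omega)_\chi$ are mutually orthogonal, the union $\{e_{g_\chi+b_j-h}\,\sub{\chi}{\Omega}:\chi\in\widehat{K},\ h\in H,\ j=1,\ldots,k\}$ is a Riesz basis of $L^2(\Omega)$ with bounds $A,B$, as desired.

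The steps that merely juggle norms and positivity are routine; what needs care is the duality bookkeeping for the quotient --- identifying $(\widehat{G}/K)^{\wedge}$ with the annihilator $K^{\perp}\subseteq G$, checking that pullback along $\pi$ of a character is exactly the corresponding character of $\widehat{G}$, and verifying that $\La_n=(\La_n^{\perp})^{\perp}$ genuinely is the dual lattice of a uniform lattice of $K^{\perp}$, so that Theorem~\ref{SNK in LCA} applies. (In the degenerate situation where $\widehat{G}/K$ is discrete, $H$ is trivial, $L^2(\cE)$ is finite dimensional, and the needed Riesz basis is produced by the same determinant/character-separation argument used to prove Theorem~\ref{SNK in LCA}.) One also invokes the elementary fact that an orthogonal direct sum of Riesz bases with common bounds is again a Riesz basis with those bounds.
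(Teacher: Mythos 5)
Your proof is correct and follows essentially the same route as the paper: reduce to the multi-tiled set $\pi(\Omega)$ in the elemental quotient $\widehat{G}/K$ via Theorem~\ref{SNK in LCA}, then lift the resulting Riesz basis of characters of $K^{\perp}$ back up to $L^2(\Omega)$. Your isotypic decomposition $L^2(\Omega)=\bigoplus_{\chi\in\widehat{K}}L^2(\Omega)_{\chi}$ with the intertwining unitaries $e_{g_\chi}$ is precisely the content of the paper's Theorem~\ref{tensor product}, whose proof carries out the same Weil's-formula computation and produces the same family of characters $\{\gamma_n+\kappa_m\}$.
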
 

If $\pi:\G\to\G/K$ denotes the canonical projection, then the set $\pi(\Omega)$ multi-tiles the quotient space with the lattice $\La_n$ defined in \eqref{losene}. By Theorem \ref{SNK in LCA}, this implies the existence of a Riesz basis of characters in the space $L^2(\pi(\Omega))$. Now, we need a result that gives us a way to lift this basis. This is provided by the following result, which in particular concludes the proof of Proposition \ref{Riesz basis in union of qdc}.

\begin{teo}\label{tensor product}
Let $K$ be a compact subgroup of an LCA group $G$ such that $\widehat{K}$ is countable. Suppose that there exists a subset $Q$ of $G/K$ such that $L^2(Q)$ admits a Riesz basis of characters of $G/K$. If $\pi:G\to G/K$ denotes the canonical projection, and $\widetilde{Q}=\pi^{-1}(Q)$, then $L^2(\widetilde{Q})$ also admits a Riesz basis of characters.  
\end{teo}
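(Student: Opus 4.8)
The plan is to exhibit $L^2(\widetilde{Q})$ as a ``tensor product'' $L^2(Q)\otimes L^2(K)$ and to build the Riesz basis as the product of the given Riesz basis of $L^2(Q)$ with the orthonormal basis of $L^2(K)$ provided by Proposition~\ref{orthba}. Normalize the Haar measure of $K$ so that $m_K(K)=1$; since $\widehat{K}$ is countable, the characters of $K$ form a countable orthonormal basis $\{\psi_i\}_{i\in I}$ of $L^2(K)$. Because $K$ is a closed subgroup of $G$, the restriction homomorphism $\widehat{G}\to\widehat{K}$ is surjective, so each $\psi_i$ extends to a character $\widetilde{\psi}_i\in\widehat{G}$. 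Writing $\{\chi_j\}_j$ for the prescribed Riesz basis of characters of $G/K$ for $L^2(Q)$ and $\pi\colon G\to G/K$ for the projection, each $\chi_j$ pulls back to a character $\widetilde{\chi}_j:=\chi_j\circ\pi\in K^{\perp}\subseteq\widehat{G}$. The claim to be proved is that $\{\widetilde{\chi}_j\,\widetilde{\psi}_i\,\chi_{\widetilde{Q}}:\ i\in I,\ j\}$ is a Riesz basis of $L^2(\widetilde{Q})$.

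To establish it I would decompose $L^2(\widetilde{Q})$ into $K$-isotypic components under the translation action of $K$. Since $\widetilde{Q}=\pi^{-1}(Q)$ is $K$-invariant, $L^2(\widetilde{Q})$ is a subrepresentation of $L^2(G)$ for this (strongly continuous, unitary) action of the compact group $K$, and hence splits into an orthogonal sum $L^2(\widetilde{Q})=\bigoplus_{\psi\in\widehat{K}}W_\psi$ with $W_\psi=\{f\in L^2(\widetilde{Q}):\ \tau_k f=\overline{\psi(k)}\,f\ \text{for all }k\in K\}$. A direct computation with Weil's formula (using $m_K(K)=1$) shows that $f\mapsto\widetilde{\psi}^{-1}f$ is an isometric isomorphism from $W_\psi$ onto the space $W_1$ of $K$-invariant elements of $L^2(\widetilde{Q})$, and that $W_1$ is in turn isometrically isomorphic to $L^2(Q)$ via $F\mapsto F\circ\pi$; consequently $W_\psi$ consists exactly of the functions $\widetilde{\psi}\cdot(F\circ\pi)$ with $F\in L^2(Q)$. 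Under the resulting unitary $W_{\psi_i}\to L^2(Q)$ one reads off that $\widetilde{\chi}_j\,\widetilde{\psi}_i\,\chi_{\widetilde{Q}}=\widetilde{\psi}_i\cdot\big((\chi_j\chi_Q)\circ\pi\big)$ is carried precisely onto $\chi_j\chi_Q$, i.e.\ onto the $j$-th vector of the given Riesz basis of $L^2(Q)$. Therefore, for each fixed $i$, the family $\{\widetilde{\chi}_j\widetilde{\psi}_i\chi_{\widetilde{Q}}\}_j$ is a Riesz basis of $W_{\psi_i}$ with the \emph{same} bounds $A,B$ as the original basis of $L^2(Q)$.

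It remains to glue these bases together, for which I would invoke the elementary fact that if a Hilbert space decomposes as an orthogonal sum $H=\bigoplus_i H_i$ and each $H_i$ carries a Riesz basis with bounds $A$ and $B$ independent of $i$, then the union of these bases is a Riesz basis of $H$ with the same bounds: the two Riesz inequalities follow from the Pythagorean identity across the summands, and completeness holds because the algebraic span of $\bigcup_i H_i$ is dense in $H$. Applying this with $H_i=W_{\psi_i}$ yields that $\{\widetilde{\chi}_j\widetilde{\psi}_i\chi_{\widetilde{Q}}\}_{i,j}$ is a Riesz basis of $L^2(\widetilde{Q})$, indeed with the same constants, which is exactly the assertion. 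I expect the main obstacle to be the middle paragraph: pinning down the isometric identification of each isotypic component $W_\psi$ with $L^2(Q)$ and verifying that untwisting by $\widetilde{\psi}^{-1}$ sends the candidate characters $\widetilde{\chi}_j\widetilde{\psi}$ exactly onto the prescribed basis of $L^2(Q)$ (so that the Riesz constants are uniform in $\psi$); after that, the gluing step is routine. It is also worth noting in passing that $m_{G/K}(Q)<\infty$ forces $m_G(\widetilde{Q})<\infty$, so the characters under consideration genuinely belong to $L^2(\widetilde{Q})$.
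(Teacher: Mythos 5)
Your proof is correct and is essentially the paper's own argument in different packaging: you propose the same candidate basis (products of the lifted quotient characters in $K^{\perp}$ with extensions to $G$ of the characters of $K$), and your isotypic decomposition $L^2(\widetilde{Q})=\bigoplus_{\psi\in\widehat{K}}W_\psi$ together with the Pythagorean gluing lemma is exactly what the paper obtains by applying Weil's formula and expanding the inner integral over $K$ in the orthonormal basis $\widehat{K}$ of $L^2(K)$. The only difference is organizational: you phrase the computation representation-theoretically, which makes the uniformity of the Riesz bounds across the components and the completeness transparent, whereas the paper verifies the two Riesz inequalities and completeness by the corresponding direct calculation.
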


\bdem
On the one hand, note that $\widehat{G/K}\simeq K^\bot\subseteq \widehat{G}$, where $K^{\bot}$ denotes the annihilator of $K$. So, the Riesz basis for $L^2(Q)$ can be identified with some elements $\{\gamma_n\}$ in $\widehat{G}$. On the other hand, the elements  $\widehat{K}$ form an orthonormal basis for $L^2(K)$ endowed with the normalized Haar measure $m_K$. Moreover, since $\widehat{K}$ can be identified with the quotient group $\widehat{G}/K^\bot$, the orthonormal basis for $L^2(K)$ can be identified with a system of representatives  $\{\kappa_m\}$ of $\widehat{G}/K^\bot$. 

\medskip

Now, we will prove that $\{{\gamma_n+\kappa_m}\}$ is a Riesz basis for $L^2(\widetilde{Q})$. First of all, we will prove that it is complete. Let $F\in L^2(\widetilde{Q})$ such that $\pint{F,{\gamma_n+\kappa_m}}=0$ for every $n$ and $m$. By the Weil's formula, $m_{G}=m_{K}\times m_{G/K}$ provided we renormalize conveniently the Haar measure on $G/K$. So, using this formula and the fact that $(k,{\gamma_n})=1$ for every $k\in K$ we get for every $m$ and every $n$
\begin{align*}
0&=\int_{\widetilde{Q}} F(g) \overline{(g,{\gamma_n+\kappa_m})} \, dm_{G}(g)\\
&=\int_{Q}\,\left(\int_K F(g+k) \overline{(g+k,{\gamma_n+\kappa_m})} \, dm_{K}(k)\right)\,dm_{G/K}(\pi(g))\\
&=\int_{Q}\,\left(\int_K F(g+k) \overline{(g,{\kappa_m})}\ \overline{(k,{\kappa_m})} \, dm_{K}(k)\right)(\pi(g),{\gamma_n})\,dm_{G/K}(\pi(g)). 
\end{align*}
Fix $m$. Then, using that $\{{\gamma_n}\}$ is a Riesz basis for $L^2(Q)$ we get that
\begin{align*}
\overline{(g,{\kappa_m})}\,\int_K F(g+k)\  \overline{(k,{\kappa_m})} \, dm_{K}(k)=0\quad \quad m_{G/K} - a.e.
\end{align*}
So, since $\{{\kappa_m}\}$ is a (countable) orthonormal basis of $K$, we get that 
\begin{align*}
\int_K |F(g+k)|^2 \, dm_{K}(k)=\sum_m \left|\int_K F(g+k)\  \overline{(k,{\kappa_m})}\, dm_{K}(k)\right|^2=0 \quad \quad m_{G/K} - a.e.
\end{align*}
So, by the Weil's formula we get:
$$
\|F\|^2_{L^2(\widetilde{Q})}=\int_{Q}\,\left(\int_K |F(g+k)|^2 \, dm_{K}(k)\right)\,dm_{G/K}(\pi(g))=0.
$$
Therefore  $\{{\gamma_n+\kappa_m}\}$ is complete. 

Now, in order to prove that it is also a Riesz sequence, consider a sequence $\{c_{n,m}\}$ with  finitely many non-zero terms. Then
\begin{align}
\left\| \sum_{n,m} c_{n,m} ({\gamma_n+\kappa_m})\right\|^2_{L^2(\widetilde{Q})}&=\int_{\widetilde{Q}}\left | \sum_{n,m} c_{n,m} (g,{\gamma_n+\kappa_m})
\right|^2dm_{G}(g) \nonumber \\
&=\int_{Q}\int_K \left | \sum_{n,m} c_{n,m} (g+k,{\gamma_n+\kappa_m})\right|^2  dm_{K}(k)\,dm_{G/K}(\pi(g)).\label{x}
\end{align}
Since $(k, {\gamma_n})=1$ for every $k\in K$, the sum inside the integrals can be rewritten as
\begin{align*}
\sum_{n,m} c_{n,m}\, (g+k,{\gamma_n+\kappa_m})=\sum_m \left((g,{\kappa_m})\sum_{n} c_{n,m}\, (\pi(g),{\gamma_n}) \right)(k, {\kappa_m}).
\end{align*}
Therefore, using that $\{{k_m}\}$ is an orthonormal basis of $L^2(K)$ we get
\begin{align}\label{xx}
\int_K \left | \sum_{n,m} c_{n,m}\, (g+k, {\gamma_n+\kappa_m})\right|^2 \, dm_{K}(k)
&=\sum_m \left|\sum_{n} c_{n,m}\, (\pi(g),{\gamma_n})\right|^2\,.
\end{align}
So, substituting \eqref{xx} in \eqref{x}, we obtain,
\begin{align*}
\left\| \sum_{n,m} c_{n,m}\, ({\gamma_n+\kappa_m})\right\|^2_{L^2(\widetilde{Q})}
&=\sum_m \int_{Q}\,  \left|\sum_{n} c_{n,m}\, (\pi(g),{\gamma_n})\right|^2\,dm_{G/K}(\pi(g)).
\end{align*}
Finally, since $\{{\gamma_n}\}$ as a Riesz basis for $L^2(Q)$, there exist $A,B>0$ such that 
\begin{align*}
A \sum_{m,n} |c_{n,m}|^2\leq \left\| \sum_{n,m} c_{n,m}\, ({\gamma_n+\kappa_m})\right\|_{L^2(\widetilde{Q})}^2\leq B \sum_{m,n} |c_{n,m}|^2,
\end{align*}
and this concludes the proof.
\edem

\begin{rem}
Statements analogous to Theorem \ref{tensor product} hold when we instead of  Riesz bases consider orthogonal bases or frames of 
characters. A proof in the case of orthonormal basis is contained in the proof of Lemma 3  of \cite{GKS}. \EOE
\end{rem}

\section{Proof of the main result} \label{main proof}

\noi Finally, using the techniques developed in the previous sections, we provide the proof of our main Theorem \ref{near critical}.

\begin{description}
\item[(i) Sampling case:]  Since the Haar measure is regular, there exists an open subset $V$ of $\widehat{G}$ such that $\Omega\subseteq V$ and $m_{\widehat{G}}(V\setminus \Omega)\leq\eps$. By Lemma \ref{tecnico1}, there exists a compact subgroup $K$ of $\widehat{G}$ so that $\widehat{G}/K$ is elemental,  $m\in\N$ large enough, and $Q_{\la_1},\ldots, Q_{\la_k}\in \dcg{K}{m}$ are such that
$$
\Omega \subseteq \bigcup_{j=1}^k Q_{\la_j}^{(m)}\subseteq V.
$$
Let $\Omega_\eps$ be the union of these $k$ quasi-dyadic cubes. Then, by Proposition \ref{Riesz basis in union of qdc}, the space $L^2(\Omega_\eps)$ admits a Riesz basis consisting of characters of $\widehat{G}$ (restricted to $\Omega_\eps$). Let $\{e_{b_n}\,\sub{\chi}{\Omega_\eps}\}$ denote this basis, and let $J_\eps=\{b_n\}\subseteq G$. Using Theorem \ref{LCA Landau} we get that $\cD(J_\eps)=m_{\widehat{G}}(\Omega_\eps)\leq m_{\widehat{G}}(\Omega)+\eps$. Note that $\{e_{b_n} \,\sub{\chi}{\Omega} \}$ is a frame for $L^2(\Omega)$, because it is obtained by projecting a Riesz basis for the bigger space $L^2(\Omega_\eps)$. So, $J_\eps$ is a sampling set for $PW_\Omega$.

\medskip

\item[(ii) Interpolation case:]  Since $m_{G}(\partial\Omega)=0$, we can work with the interior of $\Omega$. For the sake of simplicity we will use the same letter for it. Let $C$ be a compact subset of $\Omega$ such that $m_{\widehat{G}}(\Omega \setminus C)\leq \eps$. Again by Lemma \ref{tecnico1}, there exists a compact subgroup $K$ of $\widehat{G}$ so that $\widehat{G}/K$ is elemental,  $m\in\N$ large enough, and $Q_{\la_1},\ldots, Q_{\la_k}\in \dcg{K}{m}$ are such that
$$
C \subseteq \bigcup_{j=1}^k Q_{\la_j}^{(m)}\subseteq \Omega.
$$
As before, if $\Omega_\eps$ is the union of the $k$ quasi-dyadic cubes, then the space $L^2(\Omega_\eps)$ admits a Riesz basis consisting characters of $\widehat{G}$ (restricted to $\Omega_\eps$). In this case, the set $J^\eps$ consisting of these characters, forms a Riesz sequence in $L^2(\Omega)$. This is equivalent to say that, as points of $G$, they form an interpolation set for $PW_\Omega$. Since $\cD(J^\eps)=m_{\widehat{G}}(\Omega_\eps)\geq m_{\widehat{G}}(\Omega)-\eps$, which concludes the proof.\hfill $\blacksquare$
\end{description}

\vspace{.8cm}

\begin{center}
\textbf{Acknowledgments}
\end{center}

We are indebted to Jordi Marzo for enlightening discussions and, specially we would like to thank him for letting us know the open question raised in \cite{GKS}. We would also like to thank Ursula Molter for helpful discussions that let us improve the presentation of this paper. 
Finally our thanks go to the referees whose comments had made the article more readable.
\vspace{.8cm}
\begin{center}
\textbf{References}
\end{center}
\vspace{.3cm}

\end{document}